\def\R{\mathbb{R}}
\newcommand{\Rmnum}[1]{\expandafter\@slowromancap\romannumeral #1@}
\newtheorem{thm}{Theorem}[section]
\newtheorem{lemma}[thm]{Lemma}
\newtheorem{theorem}[thm]{Theorem}
\begin{document}
\author{Hai-Yang Jin}
\address{Department of Mathematics, South China University of Technology, Guangzhou 510640, China}
\email{mahyjin@scut.edu.cn}

\author{Zhi-An Wang}
\address{Department of Applied Mathematics, Hong Kong Polytechnic University, Hung Hom,
 Hong Kong}
\email{mawza@polyu.edu.hk}

\title[ The Keller-Segel system with logistic growth and signal dependent diffusion ]{ The Keller-Segel system with logistic growth and signal-dependent motility}

\begin{abstract}
The paper is concerned with the following chemotaxis system with nonlinear motility functions
\begin{equation}\label{0-1}\tag{$\ast$}
\begin{cases}
u_t=\nabla \cdot (\gamma(v)\nabla u- u\chi(v)\nabla v)+\mu u(1-u), &x\in \Omega, ~~t>0,\\
 0=\Delta v+ u-v,& x\in \Omega, ~~t>0,\\
u(x,0)=u_0(x), & x\in \Omega,
\end{cases}
\end{equation}
with homogeneous Neumann boundary conditions in a bounded domain $\Omega\subset \R^2$ with smooth boundary, where the motility functions $\gamma(v)$ and $\chi(v)$ satisfy the following conditions
\begin{itemize}
\item {\color{black}$(\gamma,\chi)\in [C^2[0,\infty)]^2$} with $\gamma(v)>0$ and {\color{black} $\frac{|\chi(v)|^2}{\gamma(v)}$ is bounded for all $v\geq 0$.}
\end{itemize}
By employing the method of energy estimates , we  establish the  existence of globally bounded solutions of \eqref{0-1} with $\mu>0$ for any $u_0 \in W^{1, \infty}(\Omega)$. Then based on a Lyapunov function, we show that all solutions $(u,v)$ of \eqref{0-1} will exponentially converge to the unique constant steady state $(1,1)$ provided $\mu>\frac{K_0}{16}$ with $K_0=\max\limits_{0\leq v \leq \infty}\frac{|\chi(v)|^2}{\gamma(v)}$.
\end{abstract}

\subjclass[2000]{35A01, 35B40, 35B44, 35K57, 35Q92, 92C17}

\keywords{Chemotaxis, density-dependent motility, Global boundedness, exponential decay}


\maketitle

\numberwithin{equation}{section}

\maketitle

\numberwithin{equation}{section}
\section{Introduction and main results}
In this paper, we consider the following chemotaxis model with density-dependent motilities
\begin{equation}\label{OR}
\begin{cases}
u_t=\nabla \cdot (\gamma(v)\nabla u- u\chi(v)\nabla v)+\mu u(1-u),&x\in\Omega,\ \ t>0, \\
\tau v_t=\Delta v+  u-v,&x\in\Omega,\ \ t>0,\\
\frac{\partial u}{\partial \nu}=\frac{\partial v}{\partial \nu}=0,&x\in \partial \Omega,t>0,\\
u(x,0)=u_0(x),v(x,0)= v_0(x) & x\in \Omega,
\end{cases}
\end{equation}
where $\Omega\subset\R^n (n\geq 2)$ is a bounded domain with smooth boundary, $u(x,t)$ denotes the cell density and $v(x,t)$ is the chemical concentration,  $\mu\geq 0$ and $\tau=\{0,1\}$. The prominent feature of \eqref{OR} compared to the classical chemotaxis model is that both the undirected motility (diffusion) and directed motility (chemotaxis) of cells depend on the chemical concentration. The system \eqref{OR} has several important applications. When $\mu=0$, the system \eqref{OR} has been firstly derived by Keller and Segel in \cite{KS1} to describe the aggregation phase of amoeba cells in response to the chemical signal cAMP emitted by themselves, where the motility functions $\gamma(v)>0$ and $\chi(v)$ are correlated by the following proportionality relation
\begin{equation}\label{r1}
\chi(v)=(\alpha-1)\gamma'(v),
\end{equation}
with $\alpha$ denoting the ratio of effective body length to step size, and $\gamma'(v)<0 \ (\text{resp.} >0)$ if the diffusive motility decreases (\text{resp.} increases) with respect to the chemical concentration. As mentioned in \cite{KS1}, although the motility coefficient $\gamma(v)$ is positive, the chemotactic  motility coefficient  $\chi(v)$ may be positive or negative depending on the signs of $(\alpha-1)$ and $\gamma'(v)$.

When both $\gamma(v)$ and $\chi(v)$ are constant, \eqref{OR} is called the minimal chemotaxis system which has been extensively studied  in the literature from various aspects including boundedness, blow-up, large-time behavior and pattern formation of solutions  (cf. \cite{X-JDE-2015, OTYM,W-CPDE-2010, TeW-CPDE-2007, LM-DCDSA-2016, Winkler-JDE-2014,BW-2016, PH-PD-2011,MOW, TeW-CPDE-2007,KOS,Lan-DCDSB-2015} and reference therein). When $\gamma(v)$ is constant and $\chi(v)=1/v$, the system \eqref{OR} with $\mu=0$ has been studied recently in a number of interesting works (see \cite{FS-Nonlinearity-2016, W-M2AS-2011} and references therein).  However, if $\gamma(v)$ is non-constant, the results of \eqref{OR} are very limited. The few existing results are mainly focused on  the special case $\chi(v)=-\gamma'(v)$, (i.e. $\alpha=0$), which reduces the system \eqref{OR} to
 \begin{equation}\label{OR-1}
\begin{cases}
u_t=\Delta (\gamma(v) u)+\mu u(1-u),&x\in\Omega,\ t>0, \\
\tau v_t=\Delta v+  u- v,&x\in\Omega,\ t>0,\\
\frac{\partial u}{\partial \nu}=\frac{\partial v}{\partial \nu}=0,&x\in \partial \Omega,t>0,\\
u(x,0)=u_0(x),v(x,0)= v_0(x) & x\in \Omega.
\end{cases}
\end{equation}
Essentially \eqref{OR-1} with $\mu>0$ has been used in \cite{Fu} to justify that the bacterial with density-suppressed motility (i.e., $\gamma'(v)<0$) can  produce the stripe pattern formation observed in the experiment of \cite{Liu}. Several results on the deduced system \eqref{OR-1} are then available as will be recalled below.

When $\mu=0$ (no cell growth), it was proved in \cite{YK} that the system \eqref{OR-1} with $\tau=1$ and $\gamma(v)=c_0/v^{k}(k>0)$ admits global classical solutions in any dimensions for small constant $c_0>0$. Recently, the smallness assumptions of $c_0$ was removed in \cite{AY-Nonlinearity-2019} for the parabolic-elliptic case of \eqref{OR-1} (i.e., $\tau=0$) for any $0<k<\frac{2}{n-2}$. {\color{black}Moreover, based on the phase plane analysis and bifurcation analysis, the existence and analytical approximation of non-constant stationary were established in one dimension \cite{Xia-Ma}.} By assuming that $\gamma(v)$ has a positive lower and upper bound {\color{black}(i.e. $\delta_1\leq \gamma(v)\leq \delta_2$ for some positive constants $\delta_1, \delta_2$)}, the global classical solution in two dimensions and global weak solution in three dimensions of \eqref{OR-1} with $\mu=0$ are obtained in \cite{TW-M3AS-2017}. Recently,  it is proved in {\color{black}\cite{JW-PAMS-2019,Fujie-Jiang}} that if $\gamma(v)=e^{-\chi v}$ there exists a critical mass $m_*=\frac{4\pi}{\chi}$ such that the solution of \eqref{OR-1} with $\mu=0$ exists globally with uniform-in-time bound if $\int_\Omega u_0dx<m_*$ and blows up if $\int_\Omega u_0dx>m_*$. Turing to the case $\mu>0$, there are several results below. When $\gamma(v)$ is a decreasing step-wise constant function, the dynamics of discontinuity interface of solutions is studied in \cite{Smith} in one dimension. In two dimensional spaces, the global boundedness of solutions of  \eqref{OR-1} with $\tau=1$ was established in \cite{JKW-SIAP-2018} under the following hypotheses on the motility function $\gamma(v)$:
\begin{itemize}
\item[(H0)] $\gamma(v)\in C^3([0,\infty)), \gamma(v)>0\ \  \mathrm{and}~\gamma'(v)<0 \ \ \mathrm{on} ~[0,\infty)$, $\lim\limits_{v \to \infty}\gamma(v)=0$,\ \ $\lim\limits_{v \to \infty}\frac{\gamma'(v)}{\gamma(v)}$ exists.
\end{itemize}
It was further shown in \cite{JKW-SIAP-2018} that the constant steady state $(1,1)$ is globally asymptotically stable provided
$
\mu>\frac{K_0}{16}$ with $K_0=\max\limits_{0\leq v \leq \infty}\frac{|\gamma'(v)|^2}{\gamma(v)}.
$
{\color{black}Similar }results have been extended to higher dimensions ($n\geq 3$) in \cite{WW-JMP-2019} for large $\mu>0$. The existence/{\color{black}nonexistence} of nonconstant steady states  of \eqref{OR-1} was recently studied in \cite{MPW-PD-2019}. Moveover, the global existence of solutions of \eqref{OR-1} with $\tau=0$ was obtained in \cite{Fujie-Jiang1} without  the condition ``$\lim\limits_{v \to \infty}\frac{\gamma'(v)}{\gamma(v)}$ exists'' in (H0).

In summary, for the chemotaxis system \eqref{OR}-\eqref{r1} with density-dependent motility, the results are available only for the special case $\alpha=0$ with various hypotheses on the motility function $\gamma(v)$ as recalled above for \eqref{OR-1}. Therefore there are various interesting questions remaining open. The following questions comprise the motivation of this paper.
\begin{itemize}
\item[(Q1)] So far no results of \eqref{OR}-\eqref{r1} are available for $\alpha\ne 0$ in the prescribed proportionality relation \eqref{r1}. Furthermore as remarked in \cite{KS1}, the prescribed proportionality \eqref{r1} between the motility functions $\gamma(v)$ and $\chi(v)$ is derived based on assumption that the cell step size is constant and the total step frequency is solely determined by the mean concentration of the chemical. However, $\chi(v)$ would  no longer be simple proportional to $\gamma'(v)$  if both step size and total step frequency were permitted to vary with the chemical concentration. Hence, it would be meaningful and interesting to study the system \eqref{OR} with more general  $\gamma(v)$ and $\chi(v)$ beyond the proportionality \eqref{r1}.
\item[(Q2)] The previous results as recalled above are mostly restricted to the case $\gamma'(v)<0$ or some special form of $\chi(v)$ (cf. \cite{AY-Nonlinearity-2019,JKW-SIAP-2018,WW-JMP-2019,JW-PAMS-2019}). However, as discussed in  \cite[Section 3]{KS1},  the cell motion may be more vigorous  at high concentrations than at low concentrations, which motives us to study the case $\gamma'(v)>0$ or even non-monotone $\gamma(v)$ so that the analytical results can cover more possible applications.
\end{itemize}

Inspired by the above mentioned questions, in this paper we shall develop some first-hand results on the global boundedness and large time behavior of solutions to the system \eqref{OR} with general motility functions $\gamma(v)$ and $\chi(v)$.  Specifically we consider \eqref{OR} with $\tau=0$
\begin{equation}\label{1-1*}
\begin{cases}
u_t=\nabla \cdot (\gamma(v)\nabla u- u\chi(v)\nabla v)+\mu u(1-u),&x\in\Omega,\ \ t>0, \\
0=\Delta v+  u-v,&x\in\Omega,\ \ t>0,\\
\frac{\partial u}{\partial \nu}=\frac{\partial v}{\partial \nu}=0,&x\in \partial \Omega,t>0,\\
u(x,0)=u_0(x),v(x,0)= v_0(x) & x\in \Omega,
\end{cases}
\end{equation}
under the following assumptions on $\gamma(v)$ and $\chi(v)$:
\begin{itemize}
\item[(H1)]  ${\color{black}(\gamma,\chi)}\in [C^2[0,\infty)]^2$ with $\gamma(v)>0$ and {\color{black} $\frac{|\chi(v)|^2}{\gamma(v)}$ is bounded for all $v\geq 0$.}
\end{itemize}
The main results of this paper are the following.
\begin{theorem}\label{GB}
Let $\Omega$ be a bounded domain  in $\R^2$ with smooth boundary and the hypotheses (H1) hold. Suppose that $u_0\in W^{1,\infty}(\Omega)$ with $u_0\geq 0(\not \equiv 0)$. Then
the problem (\ref{1-1*}) has a unique global classical solution $(u,v)\in [C([0,\infty)\times \bar{\Omega})\cap C^{2,1}((0,\infty)\times\bar{\Omega})]\times C^{2,1}((0,\infty)\times\bar{\Omega})$ satisfying $u,v>0$ for all $t>0$ and
\begin{equation*}\label{1-1}
\|u(\cdot,t)\|_{L^\infty(\Omega)}+\|v(\cdot,t)\|_{W^{1,\infty}(\Omega)}\leq C_1 \ \ \mathrm{for \ all}\  t>0,
\end{equation*}
where $C_1>0$ is a constant independent of $t$. Furthermore,  if $\mu>\frac{K_0}{16}$ with $K_0=\max\limits_{0\leq v \leq \infty}\frac{|\chi(v)|^2}{\gamma(v)}$, then  there exist two positive constants $C_2$ and $\delta$ such that
\begin{equation*}
\|u(\cdot,t)-1\|_{L^\infty(\Omega)}+\|v(\cdot,t)-1\|_{L^\infty(\Omega)}\leq  C_2e^{-\delta t}.
\end{equation*}
\end{theorem}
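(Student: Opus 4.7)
The proof splits into two parts---global boundedness and exponential convergence---both driven by energy-type identities for the $u$-equation combined with the elliptic identity $\Delta v = v - u$ supplied by the second equation.

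For global boundedness in dimension two, I would follow a bootstrap in three steps. First, integration of the $u$-equation gives
\begin{equation*}
\frac{d}{dt}\int_\Omega u = \mu\int_\Omega u - \mu\int_\Omega u^2 \leq \mu\int_\Omega u - \frac{\mu}{|\Omega|}\Bigl(\int_\Omega u\Bigr)^{2},
\end{equation*}
so $\|u(\cdot,t)\|_{L^1}\leq C$. Since (H1) only asserts $\gamma(v)>0$ pointwise, one must next bound $v$ in $L^\infty$ to confine $v$ to a compact interval on which $\gamma(v)$ and $|\chi(v)|^2/\gamma(v)$ are uniformly controlled and $\gamma$ is bounded below; this is achieved by iterating elliptic regularity for $-\Delta v + v = u$. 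Finally, testing the $u$-equation with $u^{p-1}$ and applying Young's inequality together with $|\chi|^{2}/\gamma\leq K_0$ yields
\begin{equation*}
\frac{1}{p}\frac{d}{dt}\int_\Omega u^p + \tfrac{p-1}{2}\int_\Omega u^{p-2}\gamma(v)|\nabla u|^2 \leq \tfrac{(p-1)K_0}{2}\int_\Omega u^{p}|\nabla v|^2 + \mu\int_\Omega u^{p} - \mu\int_\Omega u^{p+1},
\end{equation*}
where the logistic dissipation $-\mu\int u^{p+1}$ combined with elliptic $L^q$-bounds on $\nabla v$ closes the $L^p$ estimate. Moser iteration then delivers $\|u\|_{L^\infty}\leq C$ and elliptic regularity yields $\|v\|_{W^{1,\infty}}\leq C$.

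For exponential convergence, take the Lyapunov functional
\begin{equation*}
F(t) := \int_\Omega (u - 1 - \ln u)\,dx \geq 0,
\end{equation*}
which vanishes precisely when $u \equiv 1$. Integration by parts and the logistic identity $(1-1/u)u(1-u)=-(u-1)^{2}$ produce
\begin{equation*}
F'(t) = -\int_\Omega \frac{\gamma(v)}{u^{2}}|\nabla u|^2 + \int_\Omega \frac{\chi(v)}{u}\nabla u\cdot\nabla v - \mu\int_\Omega (u-1)^2.
\end{equation*}
Young's inequality with parameter $\lambda\in(0,2)$ and $|\chi|^{2}/\gamma\leq K_0$ bound the cross term by $\tfrac{\lambda}{2}\int \gamma u^{-2}|\nabla u|^2 + \tfrac{K_0}{2\lambda}\int|\nabla v|^2$. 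The pivotal ingredient is the sharp elliptic inequality
\begin{equation*}
\int_\Omega |\nabla v|^2 \leq \frac{1}{4}\int_\Omega (u-1)^2,
\end{equation*}
which I would derive from the identity
\begin{equation*}
\int_\Omega (u-1)^2 = \int_\Omega (u-v)^2 + 2\int_\Omega |\nabla v|^2 + \int_\Omega (v-1)^2
\end{equation*}
(using $u-v=-\Delta v$ and integration by parts to identify $\int(u-v)(v-1)=\int|\nabla v|^2$) combined with AM-GM on the cross term $\int(u-v)(v-1)$. Substituting and letting $\lambda\uparrow 2$ yields $F'(t)\leq -(\mu-K_0/16)\int_\Omega (u-1)^2$, whence $\int_\Omega(u-1)^2\in L^1(0,\infty)$. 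Together with the $t$-uniform parabolic regularity from Part 1 this forces $\|u(\cdot,t)-1\|_{L^\infty}\to 0$. Once $u$ lies in a small neighborhood of $1$, the function $s\mapsto s-1-\ln s$ behaves like $(s-1)^{2}/2$, so $F\leq C\int(u-1)^2$ and $F'\leq -cF$, giving $F(t)\leq Ce^{-\delta t}$. Interpolation against the uniform $W^{1,\infty}$-bound lifts this to exponential $L^\infty$-decay of $u$, and elliptic regularity on $-\Delta(v-1)+(v-1)=u-1$ transfers the decay to $v$.

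The main technical difficulties are (a) in Part 1 the preliminary $L^\infty$-bound on $v$, required precisely because (H1) does not include any uniform lower bound on $\gamma$, so one cannot directly test with $u^{p-1}$ without first confining $v$ to a compact range; and (b) in Part 2 the sharp factor $1/4$ in the elliptic inequality above, as the routine bound $\int|\nabla v|^2\leq \tfrac12\int(u-1)^2$ leads only to the weaker threshold $\mu>K_0/8$ rather than $K_0/16$.
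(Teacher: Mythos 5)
Your Part 2 is essentially correct and takes a mildly different route from the paper to the same threshold: your sharp elliptic inequality $\int_\Omega|\nabla v|^2\le\frac14\int_\Omega(u-1)^2$ (which is valid; it follows from testing $-\Delta(v-1)+(v-1)=u-1$ with $v-1$ and AM--GM on $\int_\Omega(u-1)(v-1)$) replaces the paper's device of adding $\delta$ times the tested $v$-equation to $\mathcal{E}'(t)$ and checking nonnegativity/positivity of two $2\times2$ quadratic forms (which forces $\delta\ge K_0/4$ and $\mu>\delta/4$, hence $\mu>K_0/16$). The two arguments are algebraically equivalent and both yield the stated threshold.

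The genuine gap is in Part 1, at the step ``$\|u\|_{L^1}\le C$ implies $\|v\|_{L^\infty}\le C$ by iterating elliptic regularity.'' In two dimensions this fails: Calder\'on--Zygmund theory for $-\Delta v+v=u$ degenerates at $p=1$, and the weak estimates one does get from an $L^1$ datum (namely $v\in W^{1,q}$ for $q<2$, hence $v\in L^q$ for every finite $q$) do not reach $L^\infty$; moreover there is nothing to ``iterate,'' since improved regularity of $v$ does not by itself improve the integrability of $u$. What $\|v\|_{L^\infty}$ actually requires is $\|u\|_{L^p}$ for some $p>1$, and obtaining this is the heart of the boundedness proof. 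The paper does it in two steps that are absent from your outline: first it tests the $u$-equation with $\ln u$ and uses the logistic absorption inequality $(1+\mu)z\ln z+Az^2-\mu z^2\ln z\le L$ to bound $\int_\Omega u\ln u$ and $\|\nabla v\|_{L^2}$ (note this step needs no lower bound on $\gamma$, since the diffusion term is simply discarded); then it runs the $u^{p-1}$ test with $p=1+\epsilon$, where $\epsilon$ is chosen so small that the factor $(p-1)$ multiplying the chemotaxis term lets it be absorbed by the logistic dissipation $-\mu\int_\Omega u^{p+1}$ after a Gagliardo--Nirenberg bound of $\int_\Omega u^p|\nabla v|^2$ by $\|u\|_{L^{p+1}}^{p+1}$. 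Only then does one get $v\in L^\infty$, the lower bound $\gamma(v)\ge\gamma_1>0$, and the subsequent $L^2$ and Moser steps. A second, smaller gap: your final lift from $\|u-1\|_{L^2}$-decay to $\|u-1\|_{L^\infty}$-decay interpolates against ``the uniform $W^{1,\infty}$-bound,'' but Part 1 only provides $\|u\|_{L^\infty}$ and $\|v\|_{W^{1,\infty}}$; a uniform gradient bound on $u$ (the paper proves $\|\nabla u\|_{L^4}\le C$ by a separate, nontrivial energy estimate for $\int_\Omega|\nabla u|^4$) must be established before that interpolation is available.
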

The results in Theorem \ref{GB} not only address the questions raised in (Q1) and (Q2), but also {\color{black}improve} the existing results on the specialized system \eqref{OR-1} where $\chi(v)=-\gamma'(v)$. Indeed with $\alpha=0$ in \eqref{r1} with $\gamma'(v)<0$, {\color{black} one can check that  ``$\lim\limits_{v\to\infty}\frac{\gamma'(v)}{\gamma(v)}$ exists''  in (H0) is a stronger condition than ``$\frac{|\chi(v)|^2}{\gamma(v)}$ is bounded for all $v\geq 0$" 
in (H1)}. For example, if $\gamma(v)=e^{-e^v}$, then $\lim\limits_{v\to\infty}\frac{\gamma'(v)}{\gamma(v)}=-\infty$ but {\color{black} $\frac{|\chi(v)|^2}{\gamma(v)}=\frac{|\gamma'(v)|^2}{\gamma(v)}=e^{(2v-e^v)}\leq e^{2(\ln 2-1)}$ for any $v\geq 0$}.
We remark the same results of \eqref{OR-1} with $\tau=0$ as in \cite{JKW-SIAP-2018} for $\tau=1$ are obtained in \cite{Fujie-Jiang1} without  the condition ``$\lim\limits_{v\to\infty}\frac{\gamma'(v)}{\gamma(v)}$ exists'' in (H0), where the methods developed therein essentially rely on the monotonicity of $\gamma(v)$ and the proportionality relation $\chi(v)=-\gamma'(v)$ and hence are inapplicable to our present problem where we consider more general $\gamma(v)$ and $\chi(v)$ without such restrictions.

\section{Local existence and Preliminaries}
In what follows, without confusion,  we shall abbreviate $\int_\Omega fdx$ as $\int_\Omega f$ and $\|f\|_{L^2(\Omega)}$ as $\|f\|_{L^2}$ for simplicity. Moreover, we shall use $c_i (i=1,2,3, \cdots$) to denote a generic constant which may vary in the context.
 The existence of local solutions of \eqref{1-1*} can be proved  by Schauder fixed point theorem as illustrated in \cite[Lemma 2.1]{JKW-SIAP-2018} for the system \eqref{OR-1} with $\tau=1$, we omit the details for brevity.
\begin{lemma}[Local existence]\label{LS}
Let $\Omega $ be a bounded domain in $\R^2$ with smooth boundary and the hypothesis (H) hold. Assume $ u_0\in W^{1,\infty}(\Omega)$ with $u_0\geq 0(\not \equiv 0)$.  Then there exists $T_{max}\in(0,\infty]$ such that the  problem \eqref{1-1*} has a unique classical solution  $(u,v) \in [C([0,\infty)\times \bar{\Omega})\cap C^{2,1}((0,\infty)\times\bar{\Omega})]\times C^{2,1}((0,\infty)\times\bar{\Omega})$ satisfying $u, v>0$ for all $t>0$. Moreover, we have
\begin{equation*}\label{bc}
Either \  \ T_{max}=\infty, \ or\ \  \limsup\limits_{t\nearrow T_{max}}(\|u(\cdot,t)\|_{L^\infty}+\|v(\cdot,t)\|_{W^{1,\infty}})=\infty.
\end{equation*}
\end{lemma}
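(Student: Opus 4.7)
The plan is to follow the Schauder fixed-point scheme sketched in \cite{JKW-SIAP-2018} for the corresponding fully parabolic problem, adapted to the parabolic-elliptic setting $\tau=0$. Since the $v$-equation in \eqref{1-1*} is elliptic, for any given $w\in C([0,T]\times\bar{\Omega})$ with $w\geq 0$ the Neumann problem $0=\Delta v+w-v$ admits a unique classical solution $v=\mathcal{S}[w]$, and standard $W^{2,p}$ elliptic estimates together with the Sobolev embedding in $\R^2$ give $v\in C([0,T];C^{1,\alpha}(\bar{\Omega}))$ for some $\alpha\in(0,1)$, with $v>0$ by the strong maximum principle. The $C^2$ regularity of $\gamma$ and $\chi$ in (H1) then ensures that $\gamma(v)$, $\chi(v)$, and their derivatives inherit this Hölder regularity.

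Substituting $v=\mathcal{S}[w]$ into the first equation of \eqref{1-1*} produces the linear parabolic problem
\begin{equation*}
u_t=\gamma(v)\Delta u+b(x,t)\cdot\nabla u+c(x,t)\,u,\quad \tfrac{\partial u}{\partial\nu}\big|_{\partial\Omega}=0,\quad u(\cdot,0)=u_0,
\end{equation*}
with $b=\gamma'(v)\nabla v-\chi(v)\nabla v$ and $c=-\nabla\cdot(\chi(v)\nabla v)+\mu(1-w)$; classical linear parabolic theory yields a unique solution $u\in C([0,T]\times\bar{\Omega})\cap C^{2,1}((0,T]\times\bar{\Omega})$. This defines the map $\Phi:w\mapsto u$. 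On the closed convex set $\mathcal{K}=\{w\in C([0,T]\times\bar{\Omega}):0\leq w\leq R,\ w(\cdot,0)=u_0\}$ with $R$ large depending on $\|u_0\|_{L^\infty}$ and $T=T(R)$ small, the parabolic maximum principle together with the logistic term confines $0\leq u\leq R$, so $\Phi(\mathcal{K})\subset\mathcal{K}$. Continuity of $\Phi$ comes from stability of the elliptic and linear parabolic solvers, and compactness from interior Hölder bounds on $u$ combined with Arzelà–Ascoli. Schauder's fixed point theorem then produces a local classical solution $(u,v)$ on $[0,T]$.

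The remaining assertions are standard. Positivity of $u$ on $(0,T]$ follows from the strong parabolic maximum principle applied to the non-divergence form of the $u$-equation since $u_0\geq 0,\not\equiv 0$; then $v>0$ follows from the elliptic equation. Uniqueness is obtained via a Grönwall argument applied to $\|u_1-u_2\|_{L^2}^2+\|v_1-v_2\|_{H^1}^2$, using local Lipschitz continuity of $\gamma,\chi$ and the boundedness of $\nabla v_i$. The blow-up alternative is established by a standard continuation argument: if $T_{\max}<\infty$ and yet $\limsup_{t\nearrow T_{\max}}(\|u(\cdot,t)\|_{L^\infty}+\|v(\cdot,t)\|_{W^{1,\infty}})<\infty$, then the coefficients in the linear parabolic equation for $u$ remain uniformly Hölder up to $T_{\max}$, Schauder estimates give uniform $C^{2+\alpha,1+\alpha/2}$ bounds on $u$ near $T_{\max}$, and the local existence scheme can be restarted at a time slightly before $T_{\max}$, contradicting maximality.

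The main technical obstacle is selecting a single Banach space for $w$ in which $\Phi$ is simultaneously well-defined, self-mapping, continuous, and compact. The pure $C^0$ topology is too weak for the parabolic estimates, whereas higher Hölder spaces require matching the initial datum $u_0\in W^{1,\infty}$. The resolution, as in \cite[Lemma 2.1]{JKW-SIAP-2018}, is to work in $C([0,T]\times\bar{\Omega})$ and harvest the needed compactness from the smoothing of the elliptic solver $\mathcal{S}$ together with interior parabolic regularity; the hypothesis $(\gamma,\chi)\in[C^2[0,\infty)]^2$ in (H1) is precisely what makes $\gamma(v)$ and $\chi(v)$ well-behaved under this composition. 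Because the argument closely mirrors that of \cite{JKW-SIAP-2018}, the authors are justified in omitting the details.
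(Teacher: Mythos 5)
Your proposal takes exactly the route the paper intends: the paper's entire proof of Lemma~\ref{LS} is a one-line appeal to the Schauder fixed-point argument of \cite[Lemma 2.1]{JKW-SIAP-2018} adapted to the parabolic--elliptic case $\tau=0$, which is precisely the scheme you spell out (elliptic solver $\mathcal{S}$, frozen-coefficient linear parabolic step, invariant set via the maximum principle, compactness, positivity, Gr\"onwall uniqueness, and continuation for the blow-up alternative). The elaboration is correct up to one standard detail you gloss over --- the zeroth-order coefficient $c$ contains $\Delta v = v - w$, which for $w$ merely continuous is not H\"older, so the first pass through the linear theory should use $L^p$ (Krylov-type) solvability, with the claimed $C^{2,1}$ regularity recovered by bootstrap once the fixed point is in hand --- and this is routinely absorbed into the same argument.
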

\begin{lemma}\label{L1u} Let $(u,v)$ be the solution of system \eqref{1-1*}. Then it holds that
\begin{equation}\label{L1u-1}
\int_\Omega u\leq m_*:=\max\{\|u_0\|_{L^1},|\Omega|\},\ \ \mathrm{for\ all }\ \ t\in(0,T_{max}).
\end{equation}
\end{lemma}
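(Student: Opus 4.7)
The plan is to reduce the claim to a scalar ODE comparison for $y(t):=\int_\Omega u(\cdot,t)$. First I would integrate the $u$-equation of \eqref{1-1*} over $\Omega$. The divergence term $\nabla\cdot(\gamma(v)\nabla u - u\chi(v)\nabla v)$ vanishes upon integration because of the homogeneous Neumann boundary conditions, so only the logistic source survives and yields
\begin{equation*}
y'(t) \;=\; \mu\int_\Omega u \;-\; \mu\int_\Omega u^2 \;=\; \mu\, y(t) \;-\; \mu\int_\Omega u^2.
\end{equation*}

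Next I would apply the Cauchy–Schwarz inequality $\bigl(\int_\Omega u\bigr)^2 \le |\Omega|\int_\Omega u^2$ (valid since $u\ge 0$ is preserved by Lemma~\ref{LS}) to estimate $\int_\Omega u^2 \ge y(t)^2/|\Omega|$. Substituting into the identity above gives the differential inequality
\begin{equation*}
y'(t) \;\le\; \mu\, y(t)\left(1 - \frac{y(t)}{|\Omega|}\right),\qquad y(0) = \|u_0\|_{L^1}.
\end{equation*}

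The final step is standard ODE comparison with the logistic equation. The right-hand side is nonpositive as soon as $y(t)\ge|\Omega|$, so $y$ cannot cross the level $|\Omega|$ from below; more precisely, if $y(0)\le|\Omega|$ then $y(t)\le|\Omega|$ for all $t$, while if $y(0)>|\Omega|$ then $y$ is non-increasing until it reaches $|\Omega|$. In either case $y(t)\le\max\{\|u_0\|_{L^1},|\Omega|\}=m_*$ on $(0,T_{\max})$, which is the desired bound.

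There is essentially no obstacle here: the only point requiring mild care is that nonnegativity of $u$ (needed for Cauchy–Schwarz in the form used and to ensure the logistic comparison is one-sided) has already been supplied by the local existence statement. No use of the hypothesis (H1) on $\gamma,\chi$ is needed, since those functions appear only inside the divergence term that is killed by the Neumann boundary conditions.
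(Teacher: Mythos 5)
Your proposal is correct and follows essentially the same route as the paper: integrate the $u$-equation, kill the divergence term by the Neumann boundary conditions, bound $\int_\Omega u^2$ from below via Cauchy--Schwarz, and conclude by comparison with the logistic ODE. Nothing to add.
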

\begin{proof}
We integrate the first equation of  (\ref{1-1*}) over $\Omega$ to have
\begin{equation*}\label{2L1-4}
\frac{d}{dt}\int_\Omega u+\mu \int_\Omega u^2=\mu\int_\Omega u,\ \ \mathrm{for\  all }\ \ t\in(0,T_{max}),
\end{equation*}
which, together with $\int_\Omega u^2\geq \frac{1}{|\Omega|}\left(\int_\Omega u\right)^2$, gives
 \begin{equation*}\label{2L1*}
 \frac{d}{dt}\int_\Omega u\leq \mu \int_\Omega u-\frac{\mu}{|\Omega |}\left(\int_\Omega u\right)^2,\ \ \mathrm{for\ all }\ \ t\in(0,T_{max})
 \end{equation*}
 and hence \eqref{L1u-1} follows.
 \end{proof}

\section{Proof of Theorem \ref{GB}}
In this section, we shall prove Theorem \ref{GB}. First, we show the global existence of uniformly-in-time bounded solutions.
\subsection{Boundedness of solutions}
\begin{lemma} \label{L2}
Suppose the conditions in Theorem \ref{GB} hold. Then there exists a constant $C>0$ independent of $t$ such that
\begin{equation}\label{L2-1}
\|u\ln u\|_{L^1}\leq C  \ \mathrm{for\  all }\ \ t\in(0,T_{max})
\end{equation}
and
\begin{equation}\label{L2-1*}
\|\nabla v\|_{L^2}\leq C \  \ \mathrm{for\  all }\ \ t\in(0,T_{max}).
\end{equation}
\end{lemma}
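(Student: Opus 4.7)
The strategy is an $L\log L$-entropy estimate for $u$ combined with elliptic regularity for $v$. I would first multiply the $u$-equation of \eqref{1-1*} by $1+\ln u$, integrate over $\Omega$, and use the Neumann boundary condition to obtain, after integration by parts,
\[
\frac{d}{dt}\int_\Omega u\ln u + \int_\Omega \frac{\gamma(v)|\nabla u|^2}{u} = \int_\Omega \chi(v)\nabla u\cdot\nabla v + \mu\int_\Omega (1+\ln u)(u-u^2).
\]
The cross term is precisely where hypothesis (H1) is designed to enter: rewriting $\chi(v)\nabla u\cdot\nabla v = \bigl(\sqrt{\gamma(v)/u}\,\nabla u\bigr)\cdot\bigl(\chi(v)\sqrt{u/\gamma(v)}\,\nabla v\bigr)$ and applying Young's inequality together with $|\chi(v)|^2/\gamma(v)\le K_0$ gives
\[
\int_\Omega \chi(v)\nabla u\cdot\nabla v \le \frac{1}{2}\int_\Omega \frac{\gamma(v)|\nabla u|^2}{u}+\frac{K_0}{2}\int_\Omega u|\nabla v|^2,
\]
absorbing half of the good dissipation and leaving the chemotactic remainder $\int_\Omega u|\nabla v|^2$ to be handled.

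To control $\int_\Omega u|\nabla v|^2$, I would exploit the elliptic identity $u = v-\Delta v$: substitution and integration by parts (with the boundary terms vanishing thanks to $\partial_\nu v=0$) give
\[
\int_\Omega u|\nabla v|^2 = \int_\Omega v|\nabla v|^2+2\int_\Omega (D^2 v\,\nabla v)\cdot\nabla v,
\]
which can then be estimated by the 2D Ladyzhenskaya-type inequality $\|\nabla v\|_{L^4}^2\le C\|D^2 v\|_{L^2}\|\nabla v\|_{L^2}$ combined with the $W^{2,2}$-elliptic bound $\|v\|_{W^{2,2}}\le C\|u\|_{L^2}$ applied to $-\Delta v+v=u$. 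The logistic contribution supplies the matching dissipation: from the elementary inequality $(1+\ln s)(s-s^2)\le C_1-C_2s^2\ln s$ (valid for $s\ge 0$), the reaction term contributes $-C_2\mu\int_\Omega u^2\ln u$, a quantity that majorizes both $\|u\|_{L^2}^2$ and $\int_\Omega u\ln u$ modulo additive constants and thus absorbs the chemotactic remainder after a further Young splitting. A Gronwall argument applied to the resulting differential inequality $\frac{d}{dt}\int_\Omega u\ln u+\delta\int_\Omega u\ln u\le C$ then yields \eqref{L2-1}.

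For \eqref{L2-1*}, once $\|u\ln u\|_{L^1}$ is uniformly bounded, standard elliptic theory for $-\Delta v+v=u$ in two dimensions (via Brezis--Merle / Trudinger--Moser type estimates, or a short bootstrap starting from $\|u\|_{L^1}\le m_*$ in Lemma \ref{L1u}) produces $\|v\|_{L^\infty}\le C$; testing the elliptic equation by $v$ then gives
\[
\|\nabla v\|_{L^2}^2+\|v\|_{L^2}^2=\int_\Omega uv\le\|v\|_{L^\infty}\|u\|_{L^1}\le C,
\]
proving \eqref{L2-1*}. The main obstacle I anticipate is the absorption step in the middle: because the present setting lacks the structural relation $\chi=-\gamma'$ exploited in \cite{JKW-SIAP-2018,Fujie-Jiang1}, no natural Lyapunov functional is at our disposal, and the control of $\int_\Omega u|\nabla v|^2$ must proceed solely through (H1) together with the logistic damping, requiring a careful quantitative comparison to ensure that the chemotactic term does not exhaust the dissipation $\int_\Omega u^2\ln u$.
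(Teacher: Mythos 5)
Your overall architecture---the entropy identity obtained by testing with $1+\ln u$, the Young splitting of the cross term via (H1) leaving the remainder $\tfrac{K_0}{2}\int_\Omega u|\nabla v|^2$, the pointwise absorption $(1+\ln s)(s-s^2)\le C_1-C_2s^2\ln s$, and the derivation of \eqref{L2-1*} only \emph{after} \eqref{L2-1}---matches the paper's proof, and your Brezis--Merle-type route to $\|v\|_{L^\infty}$ followed by testing with $v$ is an acceptable substitute for the paper's citation of \cite[Lemma A.4]{TW-JDE-2014}.

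The gap is in the control of $\int_\Omega u|\nabla v|^2$. The logistic dissipation $-\mu\int_\Omega u^2\ln u$ can only absorb a remainder that is at most \emph{quadratic} in $\|u\|_{L^2}$: $Az^2-\mu z^2\ln z$ is bounded above, but $Az^{2+\epsilon}-\mu z^2\ln z$ is not (take $u=M$ on a set of measure $m/M$, so that $\int u^2=mM$ while $\int u^2\ln u=mM\ln M$). Your proposed tools do not reach the quadratic level. The Ladyzhenskaya bound $\|\nabla v\|_{L^4}^2\le C\|D^2v\|_{L^2}\|\nabla v\|_{L^2}$ invokes $\|\nabla v\|_{L^2}$, which at this stage is not known to be bounded---it is precisely conclusion \eqref{L2-1*}, which you correctly obtain only after \eqref{L2-1}; in two dimensions the mass bound $\|u\|_{L^1}\le m_*$ alone does not yield $\nabla v\in L^2$ (this is the borderline case, which is exactly why the $L\log L$ machinery is needed). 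If one instead estimates $\|\nabla v\|_{L^2}\le C\|v\|_{W^{2,2}}\le C\|u\|_{L^2}$, the remainder becomes of order $\|u\|_{L^2}^3$, which cannot be absorbed; the substitution $u=v-\Delta v$ and integration by parts does not change this, since the resulting terms $\int_\Omega v|\nabla v|^2$ and $\int_\Omega (D^2v\,\nabla v)\cdot\nabla v$ still require $\|\nabla v\|_{L^4}^2$ and lead to the same powers. The missing ingredient is the paper's chain $\|\nabla v\|_{L^4}^2\le C\|v\|_{W^{2,\frac43}}^2\le C\|u\|_{L^{\frac43}}^2\le Cm_*\|u\|_{L^2}$: run the Agmon--Douglis--Nirenberg estimate at the exponent $\tfrac43$ so that the two-dimensional embedding $W^{2,\frac43}(\Omega)\hookrightarrow W^{1,4}(\Omega)$ applies, then interpolate $\|u\|_{L^{\frac43}}^2\le\|u\|_{L^1}\|u\|_{L^2}$ to cash in Lemma \ref{L1u} for a full power. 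This yields $\int_\Omega u|\nabla v|^2\le\|u\|_{L^2}\|\nabla v\|_{L^4}^2\le Cm_*\|u\|_{L^2}^2$, after which your absorption and Gronwall steps go through as written.
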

\begin{proof}
Multiplying the first equation of  \eqref{1-1*} by $\ln u$, and integrating the result  by part, one has
\begin{equation}\label{L2-2}
\begin{split}
\frac{d}{dt}\left(\int_\Omega u\ln u-\int_\Omega  u\right)+\int_\Omega \gamma (v)\frac{|\nabla u|^2}{u}=\int_\Omega \chi(v)\nabla v\cdot \nabla u+\mu\int_\Omega u\ln u-\mu \int_\Omega u^2 \ln u.
\end{split}
\end{equation}
From the assumptions in (H1), we can find a constant  $K>0$ such that
\begin{equation}\label{rvK}
\frac{|\chi(v)|^2}{\gamma(v)}\leq K \ \ \mathrm{for \  all \ \ }  v\geq 0.
\end{equation}
Using the Cauchy-Schwarz inequality and \eqref{rvK}, we have
\begin{equation*}
\begin{split}
\int_\Omega \chi(v)\nabla v\cdot \nabla u&\leq \frac{1}{2}\int_\Omega \gamma (v)\frac{|\nabla u|^2}{u}+\frac{1}{2}\int_\Omega \frac{|\chi(v)|^2}{\gamma (v)}|\nabla v|^2 u\\
&\leq \frac{1}{2}\int_\Omega \gamma (v)\frac{|\nabla u|^2}{u}+\frac{K}{2}\|\nabla v\|_{L^4}^2\|u\|_{L^2},
\end{split}
\end{equation*}
which, substituted into \eqref{L2-2}, gives
\begin{equation}\label{L2-3}
\begin{split}
&\frac{d}{dt}\left(\int_\Omega u\ln u-\int_\Omega  u\right)+\frac{1}{2}\int_\Omega \gamma (v)\frac{|\nabla u|^2}{u}\\
&\leq\frac{K}{2}\|\nabla v\|_{L^4}^2\|u\|_{L^2}+\mu\int_\Omega u\ln u-\mu \int_\Omega u^2 \ln u.
\end{split}
\end{equation}
Applying the Agmon-Douglis-Nirenberg  $L^p$ estimates (cf. \cite{A-CPAM-1,A-CPAM-2}) to the second equation of \eqref{1-1*} with homogeneous Neumann boundary conditions, {\color{black} we know that for all $p>1$, there exists a  constant $c_1>0$ such that}
\begin{equation}\label{W2P}
\|v(\cdot,t)\|_{W^{2,p}}\leq c_1\|u(\cdot,t)\|_{L^p}.
\end{equation}
The Sobolev embedding  theorem yields  $\|\nabla v\|_{L^4}\leq c_2\|v\|_{W^{2,\frac{4}{3}}}$ in two dimensions (i.e. $n=2$), which together with \eqref{W2P} implies
\begin{equation}\label{L2-4}
\|\nabla v\|_{L^4}^2\leq c_2^2\|v\|_{W^{2,\frac{4}{3}}}^2\leq c_3\|u\|_{L^\frac{4}{3}}^2.
\end{equation}
On the other hand, using the $L^p$-interpolation inequality and the fact $\|u(\cdot,t)\|_{L^1}\leq m_*$ (see Lemma \ref{L1u}), we have
\begin{equation}\label{L2-5}
\|u\|_{L^\frac{4}{3}}^2\leq \|u\|_{L^2}\|u\|_{L^1}\leq m_*\|u\|_{L^2}.
\end{equation}
We substitute \eqref{L2-4} and \eqref{L2-5} into \eqref{L2-3} to obtain
\begin{equation}\label{L2-6}
\begin{split}
&\frac{d}{dt}\left(\int_\Omega u\ln u-\int_\Omega  u\right)+\frac{1}{2}\int_\Omega \gamma (v)\frac{|\nabla u|^2}{u}+\left(\int_\Omega u\ln u-\int_\Omega  u\right)\\
&\leq\frac{Kc_3m_*}{2}\|u\|_{L^2}^2+(\mu+1)\int_\Omega u\ln u-\mu \int_\Omega u^2 \ln u-\int_\Omega u\\
&\leq\frac{Kc_3m_*}{2}\|u\|_{L^2}^2+(\mu+1)\int_\Omega u\ln u-\mu \int_\Omega u^2 \ln u\\
&\leq c_4,
\end{split}
\end{equation}
where we have used  the facts (see \cite[Lemma 3.1]{TW-JDE-2014}): Let  $\mu>0$ and $A\geq 0$, then there exists a constant $L:=L(\mu, A)>0$ such that
\begin{equation*}
(1+\mu) z\ln z +A z^2-\mu z^2\ln z\leq L, \ \  \mathrm{for\ all }\ \ z>0.
\end{equation*}
Hence from \eqref{L2-6}, we obtain
\begin{equation*}
\frac{d}{dt}\left(\int_\Omega u\ln u-\int_\Omega  u\right)+\int_\Omega u\ln u-\int_\Omega  u\leq c_5,
\end{equation*}
which gives  $\int_\Omega u\ln u-\int_\Omega u \leq c_6$ and then
\begin{equation}\label{L2-7}
\int_\Omega u\ln u\leq c_6+\int_\Omega u\leq c_7.
\end{equation}
Since $u\ln u\geq -\frac{1}{e}$, from \eqref{L2-7} we derive
\begin{equation*}
\int_\Omega |u\ln u|\leq \int_\Omega u\ln u+\frac{2|\Omega|}{e}\leq c_8,
\end{equation*}
which yields \eqref{L2-1}. Finally \eqref{L2-1*} is a consequence of \cite[Lemma A.4]{TW-JDE-2014}) {\color{black}applied to} the second equation of \eqref{1-1*}.
\end{proof}
Next, we will show that there exists some $p>1$ close to $1$ such that $\int_\Omega u^p$ is uniformly bounded in time.
\begin{lemma}\label{L3}
Suppose the conditions in Theorem \ref{GB} hold. Then there exists $p>1$ close to  1 such that
\begin{equation}\label{L3-1}
\|u(\cdot,t)\|_{L^p}\leq C,\ \mathrm{for  \  all}\ \ t\in(0,T_{max}),
\end{equation}
where $C>0$ is a constant independent of $t$.
\end{lemma}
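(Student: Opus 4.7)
The plan is to test the $u$-equation of \eqref{1-1*} with $u^{p-1}$ for $p>1$ chosen close to $1$, so that the small prefactor $(p-1)$ multiplying the chemotactic cross-term compensates for the loss of pointwise control on $\nabla v$ under the weak hypothesis (H1). Multiplying by $u^{p-1}$, integrating by parts, and applying Cauchy-Schwarz together with $|\chi(v)|^2/\gamma(v)\le K$, one finds, after discarding the nonnegative diffusion term,
\begin{equation*}
\frac{1}{p}\frac{d}{dt}\int_\Omega u^p+\mu\int_\Omega u^{p+1}\le \frac{K(p-1)}{2}\int_\Omega u^p|\nabla v|^2+\mu\int_\Omega u^p.
\end{equation*}

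The main work is to control $\int_\Omega u^p|\nabla v|^2$.  First I apply Young's inequality, $u^p|\nabla v|^2\le \epsilon u^{p+1}+c(\epsilon)|\nabla v|^{2(p+1)}$, to reduce matters to estimating $\int_\Omega|\nabla v|^{2(p+1)}$. For this, I combine the Agmon-Douglis-Nirenberg $W^{2,r}$-estimate \eqref{W2P} with the two-dimensional endpoint Sobolev embedding $W^{2,r}(\Omega)\hookrightarrow W^{1,2(p+1)}(\Omega)$ taken at the critical exponent $r=\frac{2(p+1)}{p+2}\in(1,2)$, which gives $\|\nabla v\|_{L^{2(p+1)}}\le c\|u\|_{L^r}$. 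Since this $r$ satisfies $\frac{1}{r}=\frac{1/2}{1}+\frac{1/2}{p+1}$, the $L^p$-interpolation inequality together with the $L^1$-bound from Lemma \ref{L1u} yields $\|u\|_{L^r}\le m_*^{1/2}\|u\|_{L^{p+1}}^{1/2}$, whence
\begin{equation*}
\int_\Omega|\nabla v|^{2(p+1)}\le c\,m_*^{p+1}\int_\Omega u^{p+1}.
\end{equation*}

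Inserting this back and optimizing in $\epsilon$, the chemotactic contribution is majorized by $\Phi(p)\int_\Omega u^{p+1}$ with $\Phi(p)=O(p-1)$ as $p\to 1^+$. Hence for $p>1$ sufficiently close to $1$ we have $\Phi(p)\le \mu/2$, and this term is absorbed by $\mu\int_\Omega u^{p+1}$ on the left. A further Young inequality $\mu u^p\le (\mu/4)u^{p+1}+c$ then produces $\frac{d}{dt}\int_\Omega u^p+\kappa\int_\Omega u^p\le C$, and Gr\"onwall's lemma gives \eqref{L3-1}. The main obstacle is the bookkeeping in the elliptic regularity step: the exponent $r$ must land exactly at the critical 2D Sobolev scale so that the interpolation produces precisely the first power of $\int_\Omega u^{p+1}$ (not higher), which matches the $\mu u^{p+1}$-dissipation; this precise matching, combined with the vanishing prefactor $K(p-1)/2$, is what makes the absorption possible. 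Notably, no lower bound on $\gamma(v)$ is required, as the diffusion term is simply discarded.
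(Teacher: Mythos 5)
Your proposal is correct, and it reaches the same structural conclusion as the paper — the chemotactic term is bounded by $\Phi(p)\int_\Omega u^{p+1}$ with $\Phi(p)\to 0$ as $p\to 1^+$, so it is absorbed by the logistic dissipation $\mu\int_\Omega u^{p+1}$ — but the key estimate is carried out by a genuinely different route. The paper bounds $\int_\Omega u^p|\nabla v|^2\le\|u\|_{L^{p+1}}^p\|\nabla v\|_{L^{2(p+1)}}^2$ and then interpolates $\|\nabla v\|_{L^{2(p+1)}}^2\le c\,\|v\|_{W^{2,p+1}}\|\nabla v\|_{L^2}$, invoking the uniform bound $\|\nabla v\|_{L^2}\le C$ of Lemma \ref{L2}, which itself rests on the entropy estimate $\|u\ln u\|_{L^1}\le C$. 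You instead split off $|\nabla v|^{2(p+1)}$ by Young's inequality and control it through the critical embedding $W^{2,r}\hookrightarrow W^{1,2(p+1)}$ with $r=\frac{2(p+1)}{p+2}$ plus the interpolation $\|u\|_{L^r}\le\|u\|_{L^1}^{1/2}\|u\|_{L^{p+1}}^{1/2}$, so that only the mass bound of Lemma \ref{L1u} is used; your exponent bookkeeping ($\theta=1/2$, hence exactly the power $\|u\|_{L^{p+1}}^{p+1}$) checks out. This makes your proof of this particular lemma independent of the logarithmic energy estimate, which is a mild simplification of the logical structure (though Lemma \ref{L2} is still needed later, e.g.\ in Lemma \ref{L5}). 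The only point to make explicit is that the constants in the Agmon--Douglis--Nirenberg and Sobolev steps depend on $p$ through $r\in(\tfrac43,2)$, so one should note they remain bounded for $p$ in a neighborhood of $1$ in order to conclude $\Phi(p)=O(p-1)$; the paper's own choice of $p=1+\epsilon$ with $\epsilon Kc_3/2<\mu/2$ implicitly relies on the same continuity, so this is not a defect relative to the original argument. Your remark that no positive lower bound on $\gamma$ is needed here is also consistent with the paper, where that lower bound is only established afterwards in Lemma \ref{L4}.
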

\begin{proof}
We multiply the first equation of \eqref{1-1*} by $u^{p-1}$ to obtain
\begin{equation}\label{L3-2}
\begin{split}
&\frac{1}{p}\frac{d}{dt}\int_\Omega u^p+(p-1)\int_\Omega \gamma (v) u^{p-2}|\nabla u|^2\\
&=(p-1)\int_\Omega \chi(v) u^{p-1}\nabla u\cdot \nabla v+\mu \int_\Omega u^p-\mu \int_\Omega u^{p+1}.
\end{split}
\end{equation}
The Cauchy-Schwarz inequality and \eqref{rvK} allow us to have
\begin{equation}\label{L3-3}
\begin{split}
&(p-1)\int_\Omega \chi(v) u^{p-1}\nabla u\cdot \nabla v\\
&\leq \frac{p-1}{2}\int_\Omega \gamma (v) u^{p-2}|\nabla u|^2+\frac{p-1}{2}\int_\Omega \frac{|\chi(v)|^2}{\gamma (v)} u^p|\nabla v|^2\\
&\leq\frac{ p-1}{2}\int_\Omega \gamma (v) u^{p-2}|\nabla u|^2+\frac{(p-1)K}{2}\int_\Omega u^p|\nabla v|^2.\\
\end{split}
\end{equation}
Using Gagliardo-Nirenberg inequality,  \eqref{L2-1*} and \eqref{W2P}, one has
\begin{equation}\label{L3-4}
\begin{split}
\int_\Omega u^p|\nabla v|^2\leq \|u\|_{L^{p+1}}^p\|\nabla v\|_{L^{2(p+1)}}^2
&\leq c_1 \|u\|_{L^{p+1}}^p\|v\|_{W^{2,p+1}}\|\nabla v\|_{L^2}\\&\leq c_2\|u\|_{L^{p+1}}^p\|v\|_{W^{2,p+1}}
\leq c_3\|u\|_{L^{p+1}}^{p+1}.
\end{split}
\end{equation}
Then we can  substitute \eqref{L3-3} and \eqref{L3-4} into \eqref{L3-2} to obtain
\begin{equation}\label{L3-5}
\frac{1}{p}\frac{d}{dt}\int_\Omega u^p+\frac{(p-1)}{2}\int_\Omega \gamma (v) u^{p-2}|\nabla u|^2\leq \frac{(p-1)Kc_3}{2}\int_\Omega u^{p+1}+\mu \int_\Omega u^p-\mu \int_\Omega u^{p+1}.
\end{equation}
Using  the H\"{o}lder inequality and Cauchy-Schwarz inequality, one can show  that
\begin{equation}\label{L3-6}
(1+\mu)\int_\Omega u^p\leq (1+\mu)|\Omega |^\frac{1}{p+1}\left(\int_\Omega u^{p+1}\right)^\frac{p}{p+1}\leq \frac{\mu}{2}\int_\Omega u^{p+1}+c_4.
\end{equation}
Moreover, we  can choose $p=1+\epsilon>1$ satisfying $\frac{\epsilon K c_3 }{2}<\frac{\mu}{2}$ to derive that
\begin{equation}\label{L3-7}
 \frac{(p-1)Kc_3}{2}\int_\Omega u^{p+1}\leq \frac{\mu}{2}\int_\Omega u^{p+1}.
\end{equation}
Then the combination of \eqref{L3-6}, \eqref{L3-7} and \eqref{L3-5} gives
\begin{equation}\label{L3-8}
\frac{1}{p}\frac{d}{dt}\int_\Omega u^p+\int_\Omega u^p\leq c_4.
\end{equation}
Applying Gronwall's inequality to \eqref{L3-8}, we have \eqref{L3-1} for some $p>1$ close to 1.
\end{proof}
Next, we will show $\|v(\cdot,t)\|_{L^\infty}$ is uniformly bounded in time, which rules out the possibility of  degeneracy.
\begin{lemma}\label{L4}
Suppose the conditions in Theorem \ref{GB} hold. Then there exists a constant $K_1>0$ such that
\begin{equation}\label{L4-1}
\|v(\cdot,t)\|_{L^\infty}\leq K_1, \ \mathrm{for\ all}\ \ t\in(0,T_{max})
\end{equation}
and
\begin{equation}\label{L4-2}
0<\gamma_1\leq\gamma(v)\leq \gamma_2.
\end{equation}
\end{lemma}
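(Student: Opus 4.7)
The plan is to read off the $L^\infty$ bound on $v$ directly from Lemma \ref{L3} via elliptic regularity for the second equation of \eqref{1-1*}, and then to deduce the two-sided bound on $\gamma(v)$ from the continuity and positivity of $\gamma$ on a compact range. Since the equation $0=\Delta v + u - v$ is a stationary linear elliptic problem at each time, the delicate parabolic arguments used in the earlier lemmas are not needed here; the work has already been done once we know $u$ lies in $L^p$ for some $p>1$.

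More precisely, I would start by invoking Lemma \ref{L3} to fix some exponent $p = 1+\epsilon > 1$ (with $\epsilon>0$ as small as needed) for which $\|u(\cdot,t)\|_{L^p}\leq c_1$ uniformly in $t\in(0,T_{\max})$. Feeding this into the Agmon--Douglis--Nirenberg estimate \eqref{W2P} applied to the second equation of \eqref{1-1*} yields
\begin{equation*}
\|v(\cdot,t)\|_{W^{2,p}}\leq c_2\|u(\cdot,t)\|_{L^p}\leq c_3 \ \ \text{for all}\ \ t\in(0,T_{\max}).
\end{equation*}
Because the spatial dimension is $n=2$, the Sobolev embedding $W^{2,p}(\Omega)\hookrightarrow L^\infty(\Omega)$ holds for every $p>1$ (since then $2p > n = 2$), and in particular for the exponent delivered by Lemma \ref{L3}. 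This immediately produces a constant $K_1>0$ with $\|v(\cdot,t)\|_{L^\infty}\leq K_1$ uniformly in $t$, which is \eqref{L4-1}.

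For \eqref{L4-2}, I would use that $v\geq 0$ by Lemma \ref{LS}, so $v(x,t)\in [0,K_1]$ for every $(x,t)$. Hypothesis (H1) gives $\gamma\in C^2[0,\infty)$ with $\gamma>0$, hence $\gamma$ is continuous and strictly positive on the compact interval $[0,K_1]$. Setting
\begin{equation*}
\gamma_1 := \min_{0\leq s\leq K_1}\gamma(s)>0,\qquad \gamma_2 := \max_{0\leq s\leq K_1}\gamma(s),
\end{equation*}
one obtains $0<\gamma_1 \leq \gamma(v(x,t))\leq \gamma_2$ at once.

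I do not expect a genuine obstacle here; the only point that needs a bit of care is that the exponent produced by Lemma \ref{L3} is only known to be ``close to $1$'', so one must check that the Sobolev embedding into $L^\infty$ survives for such $p$. In dimension two this is exactly the case $2p>2$, which is equivalent to $p>1$, so the margin produced by Lemma \ref{L3} is sharp but sufficient. Once \eqref{L4-1} is in hand, the nondegeneracy bounds in \eqref{L4-2} are an immediate consequence of the smoothness and positivity assumptions on $\gamma$ in (H1).
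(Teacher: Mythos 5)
Your proposal is correct and follows essentially the same route as the paper: invoke Lemma \ref{L3} for an $L^p$ bound on $u$ with some $p>1$, apply the elliptic estimate \eqref{W2P} to get $v$ bounded in $W^{2,p}$, use the Sobolev embedding into $L^\infty$ (valid since $2p>2=n$), and then obtain \eqref{L4-2} from the continuity and positivity of $\gamma$ on the compact range $[0,K_1]$. Your explicit check that the embedding survives for $p$ merely close to $1$ is a welcome clarification the paper leaves implicit.
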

\begin{proof}
From Lemma \ref{L3}, we can find  a constant $c_1>0$ such that $\|u(\cdot,t)\|_{L^p}\leq c_1$ for some $p>1$. Then applying the elliptic regularity estimate to the second equation of \eqref{1-1*}, one has $\|v(\cdot,t)\|_{W^{2,p}}\leq c_2\|u(\cdot,t)\|_{L^p}\leq c_1c_2$, which along with the Sobolev inequality give \eqref{L4-1}. Then since $0<\gamma(v)\in C^2([0,\infty))$, we can find two positive constants $\gamma_1$ and $\gamma_2$ such that  \eqref{L4-2} holds.
\end{proof}
\begin{lemma} \label{L5}
Suppose the conditions in Theorem \ref{GB} hold. Then there exists a constant $C>0$ such that
\begin{equation}\label{L5-1}
\|u(\cdot,t)\|_{L^2}\leq C, \ \mathrm{for\ all}\ \ t\in(0,T_{max}).
\end{equation}
\end{lemma}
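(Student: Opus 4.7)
The plan is to test the first equation of \eqref{1-1*} against $u$, control the chemotactic cross term via Cauchy-Schwarz and the hypothesis \eqref{rvK}, and then close the resulting differential inequality by combining elliptic regularity for $v$ with the uniform $L^p$-estimate from Lemma \ref{L3} for some $p>1$ strictly.

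First I would multiply the $u$-equation by $u$ and integrate by parts using the Neumann boundary condition. Applying Cauchy-Schwarz to the drift integral together with \eqref{rvK}, and the lower bound $\gamma(v)\geq \gamma_1$ from Lemma \ref{L4}, leads to
\begin{equation*}
\frac{1}{2}\frac{d}{dt}\int_\Omega u^2 + \frac{\gamma_1}{2}\int_\Omega |\nabla u|^2 + \mu\int_\Omega u^3 \leq \frac{K}{2}\int_\Omega u^2|\nabla v|^2 + \mu\int_\Omega u^2.
\end{equation*}
Repeating verbatim the computation of \eqref{L3-4} with $p=2$ (H\"older, the Gagliardo-Nirenberg inequality in two dimensions, the elliptic estimate \eqref{W2P} and \eqref{L2-1*}) bounds the chemotactic term by $\int_\Omega u^2|\nabla v|^2 \leq c_1 \|u\|_{L^3}^3$.

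The heart of the argument is to absorb $\|u\|_{L^3}^3$ into the diffusion without any smallness assumption. Writing $p=1+\varepsilon > 1$ for the exponent furnished by Lemma \ref{L3}, so that $\|u(\cdot,t)\|_{L^p}\leq M$ uniformly in $t$, the two-dimensional Gagliardo-Nirenberg inequality gives
\begin{equation*}
\|u\|_{L^3}^3 \leq c_2 \|\nabla u\|_{L^2}^{3-p}\|u\|_{L^p}^{p} + c_2\|u\|_{L^p}^3 \leq c_3 \|\nabla u\|_{L^2}^{3-p} + c_3,
\end{equation*}
and because $p>1$ makes the exponent $3-p$ strictly smaller than $2$, Young's inequality absorbs the right-hand side into $\tfrac{\gamma_1}{4}\|\nabla u\|_{L^2}^2$ modulo an additive constant. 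Handling the lower-order term $\mu\int_\Omega u^2$ by H\"older and Young in the form $\mu\int_\Omega u^2 \leq \tfrac{\mu}{2}\int_\Omega u^3 + c_4$, and then invoking $\int_\Omega u^3 \geq |\Omega|^{-1/2}\bigl(\int_\Omega u^2\bigr)^{3/2}$, one arrives at a super-linear ODE
\begin{equation*}
\frac{d}{dt}\int_\Omega u^2 + c_5\Bigl(\int_\Omega u^2\Bigr)^{3/2} \leq c_6,
\end{equation*}
which immediately yields \eqref{L5-1} after an elementary comparison argument.

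The main obstacle I anticipate is the lack of smallness: the constant $c_1$ multiplying $\|u\|_{L^3}^3$ is a fixed constant depending only on $\Omega$ and the elliptic/GN constants, hence independent of $\mu$, so a naive attempt to absorb $\tfrac{Kc_1}{2}\|u\|_{L^3}^3$ directly into $\mu\int_\Omega u^3$ would require a smallness/largeness relation between $K$ and $\mu$. The sub-quadratic exponent $3-p<2$ produced by coupling Gagliardo-Nirenberg with the $L^p$-bound of Lemma \ref{L3} is precisely the device that avoids this restriction and makes the argument close for arbitrary $\mu>0$.
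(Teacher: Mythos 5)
Your proof is correct, but it closes the key estimate by a genuinely different mechanism than the paper. Both arguments start identically (testing with $u$, Cauchy--Schwarz with \eqref{rvK}, and reducing the chemotactic term to $\|u\|_{L^3}^3$ via elliptic regularity), but the paper then controls $\|u\|_{L^3}^3$ through the Moser--Trudinger-type inequality of Nagai--Senba--Yoshida, $\|u\|_{L^3}^3\leq \varepsilon\|\nabla u\|_{L^2}^2\|u\ln u\|_{L^1}+C_\varepsilon(\cdots)$, which exploits the $L\log L$ bound of Lemma \ref{L2}; moreover, rather than estimating $\|\nabla v\|_{L^6}^2$ directly by $\|v\|_{W^{2,3}}\|\nabla v\|_{L^2}$ as you do, the paper splits off a $\|\nabla v\|_{L^6}^6$ term by Young and must therefore carry an auxiliary functional $\int_\Omega|\nabla|\nabla v|^2|^2$, requiring the boundary inequality $\frac{\partial|\nabla v|^2}{\partial\nu}\leq 2\lambda|\nabla v|^2$ of Mizoguchi--Souplet and a trace inequality. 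You bypass all of that: the Gagliardo--Nirenberg interpolation $\|u\|_{L^3}^3\leq c\|\nabla u\|_{L^2}^{3-p}\|u\|_{L^p}^{p}+c\|u\|_{L^p}^3$ against the uniform $L^{1+\epsilon}$ bound of Lemma \ref{L3} produces the strictly sub-quadratic exponent $3-p<2$, so Young's inequality absorbs the term into $\frac{\gamma_1}{4}\|\nabla u\|_{L^2}^2$ with no smallness condition, and the exponents check out ($\theta=(3-p)/3<1$, so the borderline $q=n=2$ case of Gagliardo--Nirenberg is admissible). Your route is shorter and avoids both the Moser--Trudinger inequality and the entire $\int_\Omega|\nabla v|^4$ machinery, at the price of leaning on Lemma \ref{L3} (whose exponent $p=1+\epsilon$ depends on $\mu$ and $K$, though only $p>1$ is needed here); the paper's route is self-contained given only the $L^1$ and $L\log L$ bounds and incidentally establishes the $\nabla v$ estimates it reuses later. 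Both are valid proofs of \eqref{L5-1}.
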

\begin{proof}
Multiplying the first equation of  \eqref{1-1*} by $u$ and integrating the result by parts,  using Cauchy-Schwarz inequality and  \eqref{rvK}, we end up with
\begin{equation*}\label{L5-2}
\begin{split}
\frac{1}{2}\frac{d}{dt}\int_\Omega u^2+\int_\Omega \gamma(v) |\nabla u|^2+\mu \int_\Omega u^3
&= {\color{black}\int_\Omega \chi(v)u\nabla u\cdot \nabla v}+\mu \int_\Omega u^2\\
&=\frac{1}{2}\int_\Omega \gamma (v) |\nabla u|^2+\frac{1}{2}\int_\Omega \frac{|\chi(v)|^2}{\gamma(v)} u^2|\nabla v|^2+\mu \int_\Omega u^2\\
&\leq \frac{1}{2}\int_\Omega \gamma (v) |\nabla u|^2+\frac{K}{2}\int_\Omega u^2|\nabla v|^2+\mu \int_\Omega u^2, \\
\end{split}
\end{equation*}
which, combined with \eqref{L4-2}, gives
\begin{equation}\label{L5-3}
\begin{split}
\frac{d}{dt}\int_\Omega u^2+\gamma_1\int_\Omega |\nabla u|^2 +2\mu \int_\Omega u^3\leq K\int_\Omega u^2|\nabla v|^2+2\mu \int_\Omega u^2.
\end{split}
\end{equation}
We differentiate the second equation of system $\eqref{1-1*}$ and multiply the result by $2\nabla v$ to obtain
\begin{equation}\label{L5-4}
\begin{split}
 0&=2{\color{black}}\nabla v\cdot\nabla \Delta v+2\nabla v\cdot\nabla u-2|\nabla v|^2\\
&=\Delta|\nabla v|^2-2|D^2v|^2+2\nabla v\cdot\nabla u-2|\nabla v|^2,
\end{split}
\end{equation}
where we have used  the identity $\Delta|\nabla v|^2=2\nabla v\cdot\nabla \Delta v+2|D^2v|^2 $.
Then multiplying \eqref{L5-4} by $|\nabla v|^{2}$ and  integrating the results, we have
\begin{equation}\label{L5-5}
\begin{split}
&\int_\Omega |\nabla|\nabla v|^2|^2+2\int_\Omega |\nabla v|^{2}|D^2v|^2+2\int_\Omega |\nabla v|^4\\
&=\int_{\partial\Omega}|\nabla v|^{2}\frac{\partial |\nabla v|^2}{\partial \nu}dS+2\int_\Omega|\nabla v|^{2}\nabla v\cdot\nabla u\\
&=\int_{\partial\Omega}|\nabla v|^{2}\frac{\partial |\nabla v|^2}{\partial \nu}dS -2\int_\Omega u\Delta v|\nabla v|^{2}-2\int_\Omega  u\nabla(|\nabla v|^{2})\cdot\nabla v \\
&\leq \int_{\partial\Omega}|\nabla v|^{2}\frac{\partial |\nabla v|^2}{\partial \nu}dS+2\int_\Omega u\left(|\Delta v||\nabla v|^{2}+ |\nabla|\nabla v|^{2}||\nabla v|\right).
\end{split}
\end{equation}
With the inequality $\frac{\partial|\nabla v|^2}{\partial \nu}\leq 2\lambda|\nabla v|^2$ on $\partial\Omega$ (see \cite[Lemma 4.2]{MS-2014}) and  the following trace inequality \cite[Remark 52.9]{PP-2007} for any $\varepsilon>0$:
\begin{equation*}\label{Trace inequality}
\|\varphi\|_{L^2(\partial\Omega)}\leq \varepsilon \|\nabla {\color{black}\varphi}\|_{L^2(\Omega)}+C_\varepsilon\|{\color{black}\varphi}\|_{L^2(\Omega)},
\end{equation*}
we have
 \begin{equation}\label{L5-6}
\begin{split}
\int_{\partial\Omega}|\nabla v|^{2}\frac{\partial |\nabla v|^2}{\partial \nu}dS
&\leq 2\lambda\||\nabla v|^2\|_{L^2(\partial \Omega)}^2\leq \frac{1}{4}\int_\Omega |\nabla|\nabla v|^2|^2+c_1\||\nabla v|^2\|_{L^2}^2.
\end{split}
\end{equation}
By the Gagliardo-Nirenberg inequality and the fact $\||\nabla v|^2\|_{L^1}=\|\nabla v\|_{L^2}^2\leq c_2$(see Lemma \ref{L2}), we have
\begin{equation}\label{L5-7}
\begin{split}
c_1\||\nabla v|^2\|_{L^2}^2
&\leq c_3\|\nabla |\nabla v|^2\|_{L^2}\||\nabla v|^2\|_{L^1}+c_3\||\nabla v|^2\|_{L^1}^2\leq  \frac{1}{4}\int_\Omega |\nabla|\nabla v|^2|^2+c_4.
\end{split}
\end{equation}
Then the combination of \eqref{L5-7} and (\ref{L5-6}) gives
\begin{equation}\label{L5-8}
\begin{split}
\int_{\partial\Omega}|\nabla v|^{2}\frac{\partial |\nabla v|^2}{\partial \nu}dS\leq \frac{1}{2}\int_\Omega |\nabla|\nabla v|^2|^2+c_4.
\end{split}
\end{equation}
Next, we will estimate the last term on the right of $\eqref{L5-5}$. To this end, we use the  Young's inequality and the facts  $|\Delta v|\leq \sqrt{2}|D^2 v|$ and
$
\nabla|\nabla v|^{2}=2 D^2 v\cdot\nabla v
$ to derive
\begin{equation}\label{L5-9}
\begin{split}
2\int_\Omega u\left(|\Delta v||\nabla v|^{2}+ \Big|\nabla|\nabla v|^{2}\Big||\nabla v|\right)
&\leq 2\sqrt{2}\int_\Omega u|\nabla v|^{2}|D^2 v|+4\int_\Omega u|\nabla v|^{2}|D^2v|\\
&\leq 2(\sqrt{2}+2)\int_\Omega u|\nabla v|^{2}|D^2v|\\
&\leq 2\int_\Omega |\nabla v|^{2}|D^2 v|^2+\frac{(2+\sqrt{2})^2}{2}\int_\Omega u^2|\nabla v|^{2}.
\end{split}
\end{equation}
Substituting $\eqref{L5-8}$ and $\eqref{L5-9}$ into $\eqref{L5-5}$, one has
\begin{equation}\label{L5-10}
\int_\Omega |\nabla|\nabla v|^2|^2+4\int_\Omega |\nabla v|^4\leq (2+\sqrt{2})^2\int_\Omega u^2|\nabla v|^{2}+2c_4.
\end{equation}
Combining \eqref{L5-3} and \eqref{L5-10} and using the  Young's inequality, we can find some $\zeta>0$ such that
\begin{equation}\label{L5-11}
\begin{split}
&\frac{d}{dt}\int_\Omega u^2+\gamma_1\int_\Omega |\nabla u|^2 +2\mu \int_\Omega u^3 +\int_\Omega |\nabla|\nabla v|^2|^2+4\int_\Omega |\nabla v|^4\\
&\leq [K+ (2+\sqrt{2})^2]\int_\Omega u^2|\nabla v|^2+2\mu \int_\Omega u^2+2c_4\\
&\leq  [K+ (2+\sqrt{2})^2]\|u\|_{L^3}^2\|\nabla v\|_{L^6}^2+2\mu |\Omega|^\frac{1}{3}\|u\|_{L^3}^2+{\color{black}2c_4}\\
&\leq c_5\|u\|_{L^3}^3+\zeta\|\nabla v\|_{L^6}^6+\mu \|u\|_{L^3}^3+c_6.
\end{split}
\end{equation}
With the boundedness of  $\|u\|_{L^1}$ and $\|u\ln u\|_{L^1}$ and the inequality in \cite[Lemma 3.5]{Nagai-Funk},   we can choose $\varepsilon$ small enough to obtain
\begin{equation}\label{L5-12}
\begin{split}
\|u\|_{L^3}^3
 &\leq \varepsilon \|\nabla u\|_{L^2}^2\|u\ln u\|_{L^1}+C_\varepsilon(\|u\ln u\|_{L^1}^3+\|u\|_{L^1})\leq \frac{\gamma_1}{c_5}\|\nabla u\|_{L^2}^2+c_7.
\end{split}
\end{equation}
On the other hand, using the Gagliardo-Nirenberg inequality, we can derive that
\begin{equation}\label{L5-13}
\begin{split}
\|\nabla v\|_{L^6}^6=\||\nabla v|^2\|_{L^3}^3
&\leq c_8(\|\nabla |\nabla v|^2\|_{L^2}^2\||\nabla v|^2\|_{L^1}+\||\nabla v|^2\|_{L^1}^3)\\
&\leq c_8c_2\|\nabla |\nabla v|^2\|_{L^2}^2+c_8c_2^3.
\end{split}
\end{equation}
Substituting \eqref{L5-12} and \eqref{L5-13} into \eqref{L5-11}, and choosing $\zeta=\frac{1}{c_2c_8}$, we end up with
$
\frac{d}{dt}\int_\Omega u^2+\mu \int_\Omega u^3\leq  c_{11}
$
which along with the Young inequality: $\int_\Omega u^2 \leq \mu \int_\Omega u^3 +c_{12}$ yields
\begin{equation*}
\frac{d}{dt}\int_\Omega u^2+\int_\Omega u^2\leq  c_{11}+c_{12}.
\end{equation*}
This gives \eqref{L5-1} by Gronwall's inequality.
\end{proof}
Next, we shall show the boundedness of $\|u(\cdot,t)\|_{L^\infty}$. To this end, we first improve the regularity of $v$. More precisely, we have the following results.
\begin{lemma}\label{L6}
Suppose the conditions in Theorem \ref{GB} hold. Then we have
\begin{equation}\label{L6-1}
\|\nabla v(\cdot,t)\|_{L^\infty}\leq C, \ \mathrm{for\ all}\ \ t\in(0,T_{max}),
\end{equation}
where $C>0$ is a constant independent of $t$.
\end{lemma}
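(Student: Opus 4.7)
The plan is to upgrade the regularity of $v$ by a short bootstrap that alternates between the Agmon--Douglis--Nirenberg estimate \eqref{W2P} and two-dimensional Sobolev embeddings, using the already established $\|u\|_{L^2}\leq C$ from Lemma \ref{L5} as the seed.

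First I would feed $\|u\|_{L^2}\leq C$ into \eqref{W2P} with $p=2$ to obtain $\|v\|_{W^{2,2}}\leq c_1$. Since $n=2$, $\nabla v\in W^{1,2}(\Omega)$ embeds into $L^q(\Omega)$ for every finite $q$, which gives
\begin{equation*}
\|\nabla v\|_{L^q}\leq c_2(q)\qquad\text{for every } q\in[2,\infty),
\end{equation*}
and in particular $|\nabla v|^2$ is bounded in $L^s$ for every finite $s$. This preliminary bound is the key ingredient that was missing in Lemma \ref{L3}, where only $p$ close to $1$ could be handled.

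Next I would derive uniform $L^p$ bounds for $u$ for every $p>1$. Testing the first equation of \eqref{1-1*} against $u^{p-1}$ and repeating the computation of Lemma \ref{L3} with Cauchy--Schwarz and \eqref{rvK} leads to
\begin{equation*}
\frac{1}{p}\frac{d}{dt}\int_\Omega u^p+\frac{p-1}{2}\int_\Omega \gamma(v) u^{p-2}|\nabla u|^2+\mu\int_\Omega u^{p+1}\leq \frac{(p-1)K}{2}\int_\Omega u^p|\nabla v|^2+\mu\int_\Omega u^p.
\end{equation*}
The cross term is now controlled by H\"older: $\int_\Omega u^p|\nabla v|^2\leq \|u\|_{L^{p+1}}^p\|\nabla v\|_{L^{2(p+1)}}^2\leq c_3(p)\|u\|_{L^{p+1}}^p$, and Young's inequality absorbs this, together with $\mu\int_\Omega u^p$, into the logistic dissipation $\mu\int_\Omega u^{p+1}$. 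What remains is a differential inequality of the form $\frac{d}{dt}\int_\Omega u^p+\int_\Omega u^p\leq c_4(p)$, from which Gronwall yields $\|u\|_{L^p}\leq C_p$ for every $p\in(1,\infty)$.

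The final step fixes any $p>2$ and applies \eqref{W2P} once more to conclude $\|v\|_{W^{2,p}}\leq c_5\|u\|_{L^p}\leq c_6$; since $\Omega\subset\R^2$, the Sobolev embedding $W^{2,p}(\Omega)\hookrightarrow W^{1,\infty}(\Omega)$ is valid for $p>2$ and gives \eqref{L6-1}. The only delicate point is the cross term in the $L^p$ estimate: one must ensure that the logistic dissipation dominates the chemotactic contribution. This is precisely where the preliminary bound $\|\nabla v\|_{L^q}\leq c_2(q)$ for every finite $q$ is crucial, because it keeps the constant $c_3(p)$ finite and thus enables the absorption by $\mu\int_\Omega u^{p+1}$.
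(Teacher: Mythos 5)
Your argument is correct, and it follows the same overall bootstrap skeleton as the paper (seed with $\|u\|_{L^2}\leq C$ from Lemma \ref{L5}, improve the integrability of $\nabla v$ through \eqref{W2P} and two-dimensional Sobolev embedding, run an $L^p$ energy estimate on $u$, then apply elliptic regularity once more), but the key absorption step is genuinely different. The paper only extracts $\|\nabla v\|_{L^4}\leq c_3$ from $\|v\|_{W^{2,2}}\leq C$, tests against $u^2$ alone, bounds the cross term by $K\|u\|_{L^6}^3\|\nabla v\|_{L^4}^2$, and absorbs $\|u\|_{L^6}^3=\|u^{3/2}\|_{L^4}^2$ into the diffusion term $\tfrac{4\gamma_1}{9}\int_\Omega|\nabla u^{3/2}|^2$ via the Gagliardo--Nirenberg inequality (interpolating against $\|u^{3/2}\|_{L^{4/3}}=\|u\|_{L^2}^{3/2}$); this yields $\|u\|_{L^3}\leq C$, which already suffices since $W^{2,3}(\Omega)\hookrightarrow W^{1,\infty}(\Omega)$ in two dimensions. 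You instead exploit the full family $\|\nabla v\|_{L^q}\leq c(q)$ for all finite $q$, which makes the cross term $\int_\Omega u^p|\nabla v|^2\leq c_3(p)\|u\|_{L^{p+1}}^p$ subcritical in $u$, and you absorb it into the logistic sink $\mu\int_\Omega u^{p+1}$ by Young's inequality --- no interpolation against the diffusion term is needed, and you get every $L^p$ bound rather than just $L^3$. The trade-off is that your absorption of the chemotactic term leans entirely on $\mu>0$ (harmless here, since the whole paper assumes it and the paper's own closing step \eqref{L6-5} uses $\mu>0$ as well), whereas the paper's Gagliardo--Nirenberg route places that burden on the diffusion. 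Both are valid; yours is arguably the more transparent and more general argument, the paper's the more economical one.
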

\begin{proof}
Using \eqref{W2P} and the fact $\|u(\cdot,t)\|_{L^2}\leq c_1$, we can derive that
$
\|v(\cdot,t)\|_{W^{2,2}}\leq c_2\|u(\cdot,t)\|_{L^2}\leq c_1c_2,
$
which by the Sobolev embedding theorem ($n=2$) gives
\begin{equation}\label{L6-2}
\|\nabla v\|_{L^4}\leq c_3.
\end{equation}
Then multiplying the first equation of  \eqref{1-1*} by $u^2$ and integrating it over $\Omega$ by parts, one obtains
\begin{equation*}
\begin{split}
\frac{1}{3}\frac{d}{dt}\int_\Omega u^3+2\int_\Omega \gamma(v)u|\nabla u|^2+\mu \int_\Omega u^4
&=2\int_\Omega u^2\chi(v)\nabla u\cdot\nabla v+\mu \int_\Omega u^3\\
&\leq \int_\Omega \gamma(v)u|\nabla u|^2+\int_\Omega \frac{|\chi(v)|^2}{\gamma(v)} u^3|\nabla v|^2+\frac{\mu}{2}\int_\Omega u^4+c_4,
\end{split}
\end{equation*}
which subject to the facts \eqref{rvK} and \eqref{L6-2} gives rise to
\begin{equation}\label{L6-3}
\begin{split}
\frac{1}{3}\frac{d}{dt}\int_\Omega u^3+\frac{4\gamma_1}{9}\int_\Omega |\nabla u^\frac{3}{2}|^2+\frac{\mu}{2} \int_\Omega u^4
&\leq K\int_\Omega u^3|\nabla v|^2+c_4\\
&\leq K\|u\|_{L^6}^3\|\nabla v\|_{L^4}^2+c_4\\
&\leq c_3^2K \|u\|_{L^6}^3+c_4.
\end{split}
\end{equation}
Using the Gagliardo-Nirenberg inequality with the fact $\|u^\frac{3}{2}\|_{L^\frac{4}{3}}=\|u\|_{L^2}^\frac{3}{2}\leq c_5$, we can show that
\begin{equation}\label{L6-4}
\begin{split}
c_3^2K \|u\|_{L^6}^3=c_3^2K\|u^\frac{3}{2}\|_{L^4}^2
&\leq c_6\left(\|\nabla u^\frac{3}{2}\|_{L^2}^{\frac{4}{3}}\|u^\frac{3}{2}\|_{L^\frac{4}{3}}^{\frac{2}{3}}+\|u^\frac{3}{2}\|_{L^\frac{4}{3}}^2\right)\\
&\leq c_7\|\nabla u^\frac{3}{2}\|_{L^2}^\frac{4}{3}+c_7\\
&\leq\frac{4\gamma_1}{9}\int_\Omega |\nabla u^\frac{3}{2}|^2+c_8.
\end{split}
\end{equation}
On the other hand, using the H\"{o}lder inequality and Young inequality, one has
\begin{equation}\label{L6-5}
\int_\Omega u^3\leq |\Omega|^\frac{1}{4}\left(\int_\Omega u^4\right)^\frac{3}{4}\leq\frac{\mu}{2}\int_\Omega u^4+c_9.
\end{equation}
Substituting \eqref{L6-4} and \eqref{L6-5} into \eqref{L6-3} gives
\begin{equation*}
\frac{1}{3}\frac{d}{dt}\int_\Omega u^3+\int_\Omega u^3\leq c_{10},
\end{equation*}
which along with the Gronwall's inequality gives
\begin{equation}\label{L6-6}
\|u(\cdot,t)\|_{L^3}\leq {\color{black}c_{11}}.
\end{equation}
Using the elliptic regularity \eqref{W2P} and Sobolev embedding theorem again, from \eqref{L6-6} we  derive
\begin{equation*}\label{L6-7}
\|\nabla v\|_{L^\infty}\leq {\color{black}c_{12}\|v\|_{W^{2,3}}\leq c_{13}\|u\|_{L^3}\leq c_{11}c_{13}.}
\end{equation*}
This finishes the proof.
\end{proof}
\begin{lemma}\label{L7}
Suppose the  conditions in Theorem \ref{GB} hold. Then the solution of  \eqref{1-1*} satisfies
\begin{equation}\label{L7-1}
\|u(\cdot,t)\|_{L^\infty}\leq C, \ \mathrm{for\ all}\ \ t\in(0,T_{max}),
\end{equation}
where the constant $C>0$  independent of $t$.
\end{lemma}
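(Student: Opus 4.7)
The plan is to promote the $L^2$ bound of Lemma \ref{L5} into a uniform $L^\infty$ bound on $u$ by a Moser-type (Alikakos) iteration on $\|u(\cdot,t)\|_{L^p}$. The decisive new ingredient is the uniform drift bound $\|\nabla v\|_{L^\infty}\leq c_1$ from Lemma \ref{L6}; combined with the uniform ellipticity $\gamma_1\leq \gamma(v)\leq \gamma_2$ of Lemma \ref{L4} and the ratio control $|\chi(v)|^2/\gamma(v)\leq K$ from (H1), this turns the first equation of \eqref{1-1*} into a divergence-form parabolic equation whose coefficients are all controlled in $L^\infty$.

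Concretely, I would test the first equation of \eqref{1-1*} against $u^{p-1}$ for $p\geq 2$ and integrate by parts, exactly as in the proofs of Lemmas \ref{L3} and \ref{L5}. Cauchy-Schwarz with weight $\gamma(v)$, (H1), and $\|\nabla v\|_{L^\infty}\leq c_1$ then allow the chemotactic term to be absorbed by half of the diffusion, yielding
\[
\frac{d}{dt}\int_\Omega u^p+\frac{2\gamma_1(p-1)}{p}\int_\Omega|\nabla u^{p/2}|^2+p\mu\int_\Omega u^{p+1}\leq p\Bigl(\tfrac{(p-1)Kc_1^2}{2}+\mu\Bigr)\int_\Omega u^p.
\]
Applied to $w=u^{p/2}$, the two-dimensional Gagliardo-Nirenberg inequality $\int_\Omega u^p\leq \varepsilon\int_\Omega|\nabla u^{p/2}|^2+C_\varepsilon(\int_\Omega u^{p/2})^2$, with $\varepsilon=\varepsilon(p)$ chosen small enough to absorb the right-hand side of the previous display into the dissipation term, then converts the estimate into
\[
\frac{d}{dt}\int_\Omega u^p+\int_\Omega u^p\leq C(p)\Bigl(\sup_{s\in(0,t)}\|u(\cdot,s)\|_{L^{p/2}}^{p}+1\Bigr),
\]
where $C(p)$ grows at most polynomially in $p$.

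Starting from the $L^2$ bound of Lemma \ref{L5} and iterating Gronwall along $p_k=2^k$ yields the recursion $\sup_t\|u(\cdot,t)\|_{L^{p_k}}\leq K_k\max\{\|u_0\|_{L^{p_k}},\sup_t\|u(\cdot,t)\|_{L^{p_{k-1}}}\}$ with $K_k=C(p_k)^{1/p_k}\to 1$ as $k\to\infty$. Since $\sum_k\log K_k<\infty$, the infinite product $\prod_kK_k$ converges and $\sup_t\|u(\cdot,t)\|_{L^{p_k}}$ stays bounded uniformly in $k$, which forces $\sup_t\|u(\cdot,t)\|_{L^\infty}<\infty$. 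The main obstacle is precisely this $p$-dependence bookkeeping in the Alikakos recursion; it closes here thanks to (i) uniform ellipticity $\gamma(v)\geq\gamma_1$, (ii) the $L^\infty$ drift from Lemma \ref{L6}, and (iii) the absorbing logistic term $-\mu u^{p+1}$ whose one-higher exponent matches the power produced by Gagliardo-Nirenberg.
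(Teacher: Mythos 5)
Your proposal is correct and follows essentially the same route as the paper: test against $u^{p-1}$, absorb the chemotactic term using the uniform drift bound $\|\nabla v\|_{L^\infty}\leq C$ and the ellipticity $\gamma(v)\geq\gamma_1$, control $\int_\Omega u^p$ by $\varepsilon\int_\Omega|\nabla u^{p/2}|^2+C_\varepsilon(p)\bigl(\int_\Omega u^{p/2}\bigr)^2$, and run a Moser--Alikakos iteration along $p=2^j$ with the convergent product $\prod_j C(2^j)^{2^{-j}}$. The only cosmetic difference is that the paper seeds the recursion at $N(1)$ (the $L^1$ mass bound) rather than at the $L^2$ bound of Lemma \ref{L5}, and it does not need the logistic dissipation $-\mu\int_\Omega u^{p+1}$ at this stage; neither point affects the argument.
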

\begin{proof}
Multiplying the first equation of  \eqref{1-1*} by {\color{black}$u^{p-1}(p\geq 2)$ and integrating it by parts over $\Omega$,  and using \eqref{L6-1} and  Young's inequality, we can find a constant $c_1>0$ independent of $p$ such that}
\begin{equation}\label{L7-2}
\begin{split}
&\frac{1}{p}\frac{d}{dt}\int_\Omega u^p +(p-1)\int_\Omega \gamma(v) u^{p-2}|\nabla u|^2+\mu \int_\Omega u^{p+1}\\
&=(p-1)\int_\Omega \chi(v)u^{p-1}\nabla u\cdot\nabla v+\mu\int_\Omega u^p\\
&\leq c_1(p-1)\int_\Omega |\chi(v)|u^{p-1}|\nabla u|+\mu(p-1)\int_\Omega u^p\\
&\leq \frac{p-1}{2}\int_\Omega \gamma(v) u^{p-2}|\nabla u|^2+\left(\frac{c_1^2K}{2}+\mu\right)(p-1)\int_\Omega u^p,
\end{split}
\end{equation}
which, together with the fact $\gamma(v)\geq \gamma_1>0$ in \eqref{L4-2}, gives a positive  constant $c_2=\frac{c_1^2K}{2}+\mu+1$ such that
\begin{equation}\label{L7-5}
\begin{split}
&\frac{d}{dt}\int_\Omega u^p+p(p-1)\int_\Omega u^p+\frac{2(p-1)\gamma_1}{p}\int_\Omega |\nabla u^\frac{p}{2}|^2\\
&\leq c_2p(p-1)\int_\Omega u^p\\
&\leq \frac{2(p-1)\gamma_1}{p}\int_\Omega |\nabla u^\frac{p}{2}|^2+c_3p(p-1)(1+p^2)\left(\int_\Omega u^\frac{p}{2}\right)^2,
\end{split}
\end{equation}
{\color{black}where the last inequality is obtained based on the following inequality (see \cite{TW-M3AS-2013})}
\begin{equation*}\label{L7-6}
\|f\|_{L^2}^2\leq \varepsilon \|\nabla f\|_{L^2}^2+ c_4(1+\varepsilon^{-1}) \|f\|_{L^1}^2,\ \ \ \mathrm{for\ any\ }\ \ \varepsilon>0.
\end{equation*}
The inequality \eqref{L7-5} can be rewritten as
\begin{equation*}
\frac{d}{dt}\int_\Omega u^p+p(p-1)\int_\Omega u^p\leq c_3p(p-1)(1+p^2)\left(\int_\Omega u^\frac{p}{2}\right)^2,
\end{equation*}
which, combined with  the fact $(1+p^2)\leq (1+p)^2$, gives
\begin{equation}\label{L7-7}
\frac{d}{dt}\left( e^{p(p-1)t}\int_\Omega u^p\right)\leq c_3 e^{p(p-1)t}p(p-1)(1+p)^2 \left(\int_\Omega u^\frac{p}{2}\right)^2.
\end{equation}
We integrate \eqref{L7-7} over $[0,t]$ for $0<t<T_{max}$ to obtain
\begin{equation}\label{L7-8}
\int_\Omega u^p\leq \int_\Omega u_0^p+c_3(1+p)^2\sup\limits_{0\leq t\leq T_{max}}\left(\int_\Omega u^\frac{p}{2}\right)^2.
\end{equation}
Define
\begin{equation}\label{L7-9}
N(p):=\max \Big\{\|u_0\|_{L^\infty},\sup\limits_{0\leq t\leq T_{max}}\left(\int_\Omega u^p\right)^\frac{1}{p}\Big\}.
\end{equation}
Then, we can derive from \eqref{L7-8} and \eqref{L7-9} that
\begin{equation*}
N(p)\leq [c_4(1+p)^2]^\frac{1}{p}N(\frac{p}{2}) \ \ \mathrm{for}\ \  p\geq 2.
\end{equation*}
Taking $p=2^j, j=1,2,\cdots,$ one obtains
\begin{equation*}
\begin{split}
N(2^j)&\leq c_4^{2^{-j}}(1+2^j)^{2^{-j+1} }N(2^{j-1})\\
          &\vdots\\
          &\leq c_4^{2^{-j}+\cdots+2^{-1}}(1+2^j)^{2^{-j+1}}\cdots(1+2)N(1)\\
          &\leq  c_4[2^{j2^{-j+1}}(2^{-j}+1)^{2^{-j+1}}]\cdots[2(2^{-1}+1)]N(1)\\
          &\leq c_4 2^{2[j2^{-j}+(j-1)2^{-(j-1)}+\cdots+2^{-1}]}\cdot 2^{2[2^{-j}+2^{-(j-1)}+\cdots+2^{-1}]}N(1)\\
          &\leq c_4 2^{6} N(1).
 \end{split}
\end{equation*}
Letting $j\to\infty$ and noting the boundedness of $\|u\|_{L^1}$, we have
\begin{equation*}
\|u(\cdot,t)\|_{L^\infty}\leq c_4 2^{6} N(1)\leq c_4 2^{6} \max \{\|u_0\|_{L^\infty},\|u_0\|_{L^1}\}\leq c_5,
\end{equation*}
which gives \eqref{L7-1}.
\end{proof}
\begin{lemma}\label{L8}
Let $\Omega$ be a bounded domain  in $\R^2$ with smooth boundary and the hypothesis (H1) hold. Suppose that $u_0\in W^{1,\infty}(\Omega)$ with $u_0\geq 0(\not \equiv 0)$. Then  the problem \eqref{1-1*} has a unique solution $[C^0([0,\infty)\times \bar{\Omega})\cap C^{2,1}((0,\infty)\times\bar{\Omega})]\times C^{2,1}((0,\infty)\times\bar{\Omega}) $, which satisfies
 \begin{equation*}\label{L1-1*}
 \|u(\cdot,t)\|_{L^\infty}+\|v(\cdot,t)\|_{W^{1,\infty}}\leq C.
 \end{equation*}
\end{lemma}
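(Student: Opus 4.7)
The plan is to combine the local existence result from Lemma \ref{LS} with the a priori estimates established in Lemmas \ref{L2}--\ref{L7} via the standard continuation principle. First I would invoke Lemma \ref{LS} to obtain a unique classical solution $(u,v)$ on a maximal time interval $[0,T_{\max})$, together with the extensibility dichotomy: either $T_{\max}=\infty$, or
\[
\limsup_{t\nearrow T_{\max}}\bigl(\|u(\cdot,t)\|_{L^\infty}+\|v(\cdot,t)\|_{W^{1,\infty}}\bigr)=\infty.
\]

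Next, I would collect the uniform-in-time bounds already obtained. Lemma \ref{L7} supplies a constant (independent of $t\in(0,T_{\max})$) controlling $\|u(\cdot,t)\|_{L^\infty}$; Lemma \ref{L4} controls $\|v(\cdot,t)\|_{L^\infty}$; and Lemma \ref{L6} controls $\|\nabla v(\cdot,t)\|_{L^\infty}$. Adding these three estimates yields a constant $C>0$, independent of $t\in(0,T_{\max})$, such that
\[
\|u(\cdot,t)\|_{L^\infty}+\|v(\cdot,t)\|_{W^{1,\infty}}\leq C \qquad \text{for all } t\in(0,T_{\max}).
\]

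This bound is incompatible with the blow-up alternative in the extensibility criterion, so $T_{\max}=\infty$ necessarily, and the displayed estimate then holds on the whole interval $(0,\infty)$. The regularity statement $(u,v)\in[C^0([0,\infty)\times\bar\Omega)\cap C^{2,1}((0,\infty)\times\bar\Omega)]\times C^{2,1}((0,\infty)\times\bar\Omega)$ is inherited directly from Lemma \ref{LS} once global existence is secured.

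Since the present lemma is a synthesis rather than a new estimate, there is no substantive obstacle: the heavy analytic work was carried out in Lemmas \ref{L2}--\ref{L7}, where the key difficulty was the coupling term $\chi(v)\nabla v\cdot\nabla u$ handled by the boundedness of $|\chi(v)|^2/\gamma(v)$ together with the $L^p$-estimates on $v$ coming from the elliptic equation. The role of Lemma \ref{L8} is purely to record the conclusion and invoke the continuation principle.
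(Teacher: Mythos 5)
Your proposal is correct and follows essentially the same route as the paper: invoke the extensibility criterion from Lemma \ref{LS} and defeat the blow-up alternative using the uniform bounds already established. The only cosmetic difference is that the paper obtains $\|v(\cdot,t)\|_{W^{1,\infty}}\leq C$ in one step by applying elliptic regularity to the second equation once $\|u(\cdot,t)\|_{L^\infty}$ is bounded by Lemma \ref{L7}, whereas you assemble it from Lemmas \ref{L4} and \ref{L6}; both are valid.
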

\begin{proof} From Lemma \ref{L7}, we can find a constant $c_1>0$ such that $\|u(\cdot,t)\|_{L^\infty}\leq c_1$. Then using the elliptic regularity, from the second equation of \eqref{1-1*} one obtains $\|v(\cdot,t)\|_{W^{1,\infty}}\leq c_2$. By Lemma \ref{LS}, the existence of global classical solutions follows immediately.
\end{proof}
\subsection{Large time behavior}
In this section, we will study the large time behavior of solution for the system \eqref{1-1*}. Let
\begin{equation}\label{K0}
K_0=\max\limits_{0\leq v\leq \infty}\frac{|\chi(v)|^2}{\gamma(v)}
\end{equation}
and
\begin{equation}\label{LT**}
\begin{split}
\mathcal{E}(t):=\int_\Omega\left(u-1-\ln u\right).
\end{split}
\end{equation}
Then {\color{black}based on} some ideas in \cite{JKW-SIAP-2018,Tao-Winkler-SIMA-2015}, we shall show that  the constant steady state $(1,1)$ is globally asymptotically stable by showing $\mathcal{E}(t)$ is a Lyapunov functional under the conditions $\mu>\frac{K_0}{16}$. More precisely, we have the following result.
\begin{lemma}\label{LT}
Suppose $(u,v)$ is the solution of \eqref{1-1*} obtained in Lemma \ref{L8}. Let $K_0$ and $\mathcal{E}(t)$ be defined by \eqref{K0} and \eqref{LT**}, respectively. Then we have the following results:
 \begin{itemize}
\item[(1)] $\mathcal{E}(t)\geq 0$ for any $t>0$;
\item[(2)] If $\mu>\frac{K_0}{16}$,
 then there exists a positive constant $\beta$ such that  for all $t>0$
\begin{equation}\label{LT*}
\mathcal{E}'(t)\leq -\mathcal{F}(t),
\end{equation}
where
\begin{equation*}
\mathcal{F}(t):=\beta\cdot\Big\{\int_\Omega (u-1)^2+\int_\Omega (v-1)^2\Big\}.
\end{equation*}
\end{itemize}
\end{lemma}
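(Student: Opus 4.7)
\medskip
\noindent\textbf{Proof proposal for Lemma \ref{LT}.}

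For part (1), the plan is to invoke the elementary fact that the function $\phi(z):=z-1-\ln z$ satisfies $\phi(z)\geq 0$ for all $z>0$ (with equality only at $z=1$), since $\phi'(z)=1-1/z$ vanishes precisely at $z=1$ and $\phi''(z)=1/z^{2}>0$. Since $u(\cdot,t)>0$ on $\bar\Omega$ by Lemma \ref{L8}, integrating this pointwise inequality gives $\mathcal{E}(t)\geq 0$.

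For part (2), I will differentiate $\mathcal{E}$ in time and use the first equation of \eqref{1-1*}. Writing $\mathcal{E}'(t)=\int_{\Omega}\frac{u-1}{u}u_{t}$, substituting the PDE, and integrating by parts against $\nabla\!\left(\frac{u-1}{u}\right)=\frac{\nabla u}{u^{2}}$ under the homogeneous Neumann boundary condition yields
\begin{equation*}
\mathcal{E}'(t)=-\int_{\Omega}\gamma(v)\frac{|\nabla u|^{2}}{u^{2}}+\int_{\Omega}\frac{\chi(v)}{u}\nabla u\cdot\nabla v-\mu\int_{\Omega}(u-1)^{2}.
\end{equation*}
The cross term will be handled by Young's inequality with matching weights $\sqrt{\gamma(v)}\,|\nabla u|/u$ and $|\chi(v)|/\sqrt{\gamma(v)}\,|\nabla v|$, absorbing the full dissipation term:
\begin{equation*}
\int_{\Omega}\frac{\chi(v)}{u}\nabla u\cdot\nabla v\leq \int_{\Omega}\gamma(v)\frac{|\nabla u|^{2}}{u^{2}}+\frac{1}{4}\int_{\Omega}\frac{|\chi(v)|^{2}}{\gamma(v)}|\nabla v|^{2}.
\end{equation*}
Using the definition of $K_{0}$ in \eqref{K0} then gives $\mathcal{E}'(t)\leq \frac{K_{0}}{4}\int_{\Omega}|\nabla v|^{2}-\mu\int_{\Omega}(u-1)^{2}$.

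Next I will convert $\int_{\Omega}|\nabla v|^{2}$ into lower-order terms via the elliptic equation $-\Delta v=u-v$. Integrating this over $\Omega$ first yields $\int_{\Omega}u=\int_{\Omega}v$, hence $\int_{\Omega}(u-1)=\int_{\Omega}(v-1)$. Testing $-\Delta v=u-v$ with $v-1$ and integrating by parts gives
\begin{equation*}
\int_{\Omega}|\nabla v|^{2}=\int_{\Omega}(u-1)(v-1)-\int_{\Omega}(v-1)^{2}.
\end{equation*}
Inserting this and applying Young's inequality $(u-1)(v-1)\leq \frac{1}{4\lambda}(u-1)^{2}+\lambda(v-1)^{2}$ yields
\begin{equation*}
\mathcal{E}'(t)\leq -\Bigl(\mu-\tfrac{K_{0}}{16\lambda}\Bigr)\!\int_{\Omega}(u-1)^{2}-\tfrac{K_{0}(1-\lambda)}{4}\!\int_{\Omega}(v-1)^{2}.
\end{equation*}

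The key step, and the only real obstacle, is then to pick $\lambda$ so that both coefficients are strictly positive simultaneously; this requires $\frac{K_{0}}{16\mu}<\lambda<1$, which has a solution exactly when $\mu>\frac{K_{0}}{16}$. Under this hypothesis, fixing any such $\lambda$ (for instance the midpoint of that interval) and setting $\beta:=\min\!\bigl\{\mu-\frac{K_{0}}{16\lambda},\,\frac{K_{0}(1-\lambda)}{4}\bigr\}>0$ yields \eqref{LT*}.
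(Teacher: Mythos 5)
Your argument is correct and is essentially the paper's own proof: the same energy identity for $\mathcal{E}$, the same testing of the elliptic equation with $v-1$, and the same threshold $\mu>\frac{K_0}{16}$; the paper merely keeps a free multiplier $\delta$ on the elliptic identity and checks nonnegativity/positivity of two $2\times 2$ quadratic forms, whereas you in effect fix the borderline choice $\delta=\frac{K_0}{4}$ and carry out the corresponding Young inequalities explicitly. The only (degenerate) caveat is that if $\chi\equiv 0$, so $K_0=0$, your coefficient $\frac{K_0(1-\lambda)}{4}$ of $\int_\Omega (v-1)^2$ vanishes; this is repaired by noting that the elliptic identity already yields $\int_\Omega (v-1)^2\leq \int_\Omega (u-1)^2$, or by retaining a small positive multiple of that identity as the paper does.
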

\begin{proof}
First, we will show the non-negativity of $\mathcal{E}(t)$. In fact, letting  $\phi(u):=u-1-\ln u,u>0$ and noting that $\phi(1)=\phi'(1)=0$, and applying the Taylor's formula to $\phi(u)$ at $u=1$ gives
\begin{equation}\label{LT-6}
\phi(u)=\frac{1}{2}\phi''(\tilde{u})(u-1)^2=\frac{1}{2\tilde{u}^2}(u-1)^2\geq 0,
\end{equation}
where $\tilde{u}$ is between $1$ and $u$, which implies $\mathcal{E}(t)\geq 0$.

 Next, we show \eqref{LT*} hold. In fact, using the first equation of \eqref{1-1*}, we have
\begin{equation}\label{LT-1}
\begin{split}
\mathcal{E}'(t)=\frac{d}{dt}\int_\Omega (u-1-\ln u)
&=-\int_\Omega \nabla \left(\frac{u-1}{u}\right) \cdot[\gamma(v)\nabla u-{\color{black}\chi(v) u\nabla v}]-\mu \int_\Omega (u-1)^2\\
&=-\int_\Omega \gamma(v)\frac{|\nabla u|^2}{u^2}+\int_\Omega \chi(v)\frac{\nabla u\cdot\nabla v}{u}-\mu \int_\Omega (u-1)^2.
\end{split}
\end{equation}
On the other hand, we multiply the second equation of system \eqref{1-1*} by $v-1$ and integrate it by parts to obtain
\begin{equation}\label{LT-2}
0=-\int_\Omega |\nabla v|^2-\int_\Omega (v-1)^2+\int_\Omega (u-1)(v-1).
\end{equation}
Multiplying \eqref{LT-2} by a constant $\delta>0$ and adding the result to \eqref{LT-1}, we obtain
\begin{equation}\label{LT-3}
\begin{split}
\frac{d}{dt}\int_\Omega (u-1-\ln u)
&=\underbrace{ -\int_\Omega \gamma(v)\frac{|\nabla u|^2}{u^2}-\delta\int_\Omega |\nabla v|^2+\int_\Omega \chi(v)\frac{\nabla u\cdot\nabla v}{u}}_{I_1}\\
&\ \ \ \ \underbrace{-\mu \int_\Omega (u-1)^2-\delta\int_\Omega (v-1)^2+\delta\int_\Omega (u-1)(v-1)}_{I_2}.\\
\end{split}
\end{equation}
For $I_1$, we can rewrite it as
$$I_1=-\Theta_1^T A_1 \Theta_1,\ \Theta_1=\left(\begin{array}{c}\nabla u\\[1mm]\nabla v \end{array}\right), \ \ A_1=\left(\begin{array}{cc}\frac{\gamma(v)}{u^2} & -\frac{\chi(v)}{2 u} \\[1mm]-\frac{\chi(v)}{2 u} &\delta \end{array}\right)$$
where $\Theta_1^T$ denotes the transpose of $\Theta_1$. One can check that  $A_1$ is non-negative definite if and only if
\begin{equation}\label{LT-4}
\delta \geq \max\limits_{0\leq v\leq \infty}\frac{|\chi(v)|^2}{4\gamma(v)}=\frac{K_0}{4}.
\end{equation}
Similarly, we can also rewrite $I_2$ as
$$I_2=-\Theta_2^T A_2 \Theta_2,\ \Theta_2=\left(\begin{array}{c} u-1\\[1mm] v-1 \end{array}\right), \ \ A_2=\left(\begin{array}{cc}\mu & \frac{\delta}{2} \\[1mm]\frac{\delta}{2 } &\delta \end{array}\right).$$
$A_2$ is positive definite if and only if
\begin{equation}\label{LT-5}
\mu>\frac{\delta}{4}.
\end{equation}
Hence, we can always find  a positive constant $\delta$ such that \eqref{LT-4} and\eqref{LT-5} hold provided $\mu>\frac{K_0}{16}$. Since $A_1$ is non-negative definite and $A_2$ is positive definite,
then from \eqref{LT-3}, we can find a constant  $\beta>0$ such that \eqref{LT*} holds.

\end{proof}

Next, we will use \eqref{LT*} to show the convergence of solution $(u,v,w)$ in  $L^\infty$-norm. Before that, we first improve the regularity of solutions $(u,v)$.
\begin{lemma}\label{L9}
There exist $\sigma\in(0,1)$ and $C>0$ such that
\begin{equation}\label{L9-1}
\|u\|_{{C^{\sigma,\frac{\sigma}{2}}}(\bar{\Omega}\times[t,t+1])}\leq C, \ \mathrm{for}\ all\ \  t\geq 0.
\end{equation}
\end{lemma}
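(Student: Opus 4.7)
The statement asks for uniform Hölder regularity of $u$ on time slabs $[t,t+1]$, with constants independent of $t$. Given the a priori bounds already in hand from Lemma \ref{L8}, this is a direct application of classical De Giorgi–Nash–Moser type Hölder theory for divergence-form parabolic equations with bounded measurable coefficients.

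My plan is to view the first equation of \eqref{1-1*} as a linear divergence-form parabolic PDE for $u$, of the form
\begin{equation*}
u_t = \nabla\cdot\bigl(A(x,t)\nabla u + B(x,t)\bigr) + f(x,t),
\end{equation*}
with $A(x,t):=\gamma(v(x,t))$, $B(x,t):=-u(x,t)\chi(v(x,t))\nabla v(x,t)$, and $f(x,t):=\mu u(1-u)$. By Lemma \ref{L8} together with \eqref{L4-2}, $A$ is measurable, bounded, and uniformly elliptic ($\gamma_1\leq A\leq \gamma_2$) with constants independent of $t$. Since $u$ is bounded, $v\in L^\infty$ so $\chi(v)$ is bounded (as $\chi\in C^2[0,\infty)$ restricted to the range of $v$), and $\nabla v\in L^\infty$ uniformly in $t$, the lower order data $B$ and $f$ lie in $L^\infty(\Omega\times(0,\infty))$ with bounds independent of $t$.

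With these uniform bounds, the interior plus boundary Hölder regularity theorem for bounded weak solutions of divergence form parabolic equations applies: see Ladyzhenskaya–Solonnikov–Ural'tseva (Chapter~III, Theorem 10.1) and Lieberman (\emph{Second Order Parabolic Differential Equations}, Chapter VI). This yields $\sigma\in(0,1)$ and $C>0$, depending only on $\gamma_1,\gamma_2$, on the $L^\infty$ bounds of $u$, $\chi(v)$, $\nabla v$, $\mu$ and on $\Omega$, such that
\begin{equation*}
\|u\|_{C^{\sigma,\sigma/2}(\bar\Omega\times[t,t+1])}\leq C\qquad\text{for all } t\geq 1.
\end{equation*}
The homogeneous Neumann boundary condition is compatible with this estimate up to $\partial\Omega$. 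For the initial slab $[0,1]$ the same bound (possibly with a slightly smaller exponent) follows from the same theorems using $u_0\in W^{1,\infty}(\Omega)$ as initial regularity. Combining the two cases and relabeling constants gives \eqref{L9-1}.

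The only mildly delicate point, which I expect to be the main (if modest) obstacle, is verifying that the hypothesis on the coefficient $A$ required by the quoted Hölder theorem is met uniformly in $t$: one needs $\gamma(v)$ to be bounded above and below by \emph{fixed} positive constants on $\Omega\times[0,\infty)$, not merely on each finite time interval. This however is exactly the content of \eqref{L4-2}, and the $L^\infty$ bounds on $B$ and $f$ are equally $t$-independent by Lemma \ref{L8}, so all constants entering the Hölder estimate can be chosen independently of $t$, which is what produces \eqref{L9-1}.
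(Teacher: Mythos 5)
Your proposal is correct and follows essentially the same route as the paper: both recast the first equation of \eqref{1-1*} in divergence form, verify uniform structure conditions (uniform ellipticity of $\gamma(v)$ via \eqref{L4-2} and $t$-independent $L^\infty$ bounds on the flux term $u\chi(v)\nabla v$ and the source $\mu u(1-u)$ via Lemma \ref{L8}), and then invoke a standard H\"older regularity theorem for parabolic equations. The only difference is cosmetic: the paper absorbs the chemotactic flux into the quasilinear vector field $A(x,t,\xi)$ and cites Porzio--Vespri, whereas you treat it as a bounded inhomogeneous drift in a linear equation and cite Ladyzhenskaya--Solonnikov--Ural'tseva/Lieberman; both yield \eqref{L9-1} with constants independent of $t$.
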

\begin{proof}
From Lemma \ref{L8}, we can find three positive constants $c_1,c_2,c_3$ such that
\begin{equation*}
0<u(x,t)\leq c_1, 0<v(x,t)\leq c_2 \  \mathrm{and}\ \ |\nabla v(x,t)|\leq c_3 \ \mathrm{for \ all }\ x\in \Omega \ \mathrm{and} \ \  t\in (0,T_{max}).
\end{equation*}
The first equation of  \eqref{1-1*} can be rewritten as
\begin{equation}\label{L9-3}
u_t=\nabla \cdot A(x,t,\nabla u)+B(x,t) \ \mathrm{for\ all}\  x\in \Omega \ \mathrm{and} \ \ t\in (0,T_{max}),
\end{equation}
where
\begin{equation*}
A(x,t,\xi):=\gamma(v)\cdot \xi-\chi(v)u \nabla v
\end{equation*}
and
\begin{equation*}
B(x,t):=\mu u(\cdot,t)(1-u(\cdot,t)).
\end{equation*}
Noting the assumptions in (H1) and using the Young's inequality, we can obtain that
\begin{equation}\label{L9-4}
\begin{split}
A(x,t,\nabla u)\cdot \nabla u
&=\gamma(v)|\nabla u|^2-\chi(v)u\nabla v\cdot\nabla u\\
&\geq \gamma(v)|\nabla u|^2-|\chi(v)| u|\nabla v| |\nabla u|\\
&\geq \frac{\gamma(v)}{2}|\nabla u|^2-\frac{|\chi(v)|^2}{2\gamma(v)} u^2|\nabla v|^2\\
\end{split}
\end{equation}
and
\begin{equation*}
|A(x,t,\nabla u)|\leq \gamma_2|\nabla u|+c_4 \ \mathrm{for\ all}\  x\in \Omega \ \mathrm{and} \ \ t\in (0,T_{max})
\end{equation*}
as well as
\begin{equation}\label{L9-6}
|B(x,t)|\leq \mu c_1(1+c_1) \ \mathrm{for\ all}\  x\in \Omega \ \mathrm{and} \ \ t\in (0,T_{max}).
\end{equation}
Then \eqref{L9-4}-\eqref{L9-6} allow us to apply the H\"{o}lder regularity for quasilinear parabolic equations \cite[Theorem 1.3 and Remark 1.4]{RT} to conclude that $u$ satisfies \eqref{L9-1}.
\end{proof}

\begin{lemma}\label{LT3}
Suppose that $\mu>\frac{K_0}{16}$ and let $(u,v)$ be the global classical solution of the system \eqref{1-1*}. Then it follows that
\begin{equation}\label{LT3-1}
\|u(\cdot,t)-1\|_{L^\infty}\to 0,\ \  \mathrm{as} \ \ \ t\to\infty
\end{equation}
and
\begin{equation}\label{LT3-2}
 \|v(\cdot,t)-1\|_{L^\infty}\to 0, \ \ \mathrm{as} \ \ \ t\to\infty.
\end{equation}
\end{lemma}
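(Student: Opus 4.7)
The plan is to extract from the Lyapunov dissipation inequality \eqref{LT*} enough decay of $u-1$ and $v-1$ in $L^2$-in-space, $L^1$-in-time, to then upgrade this to pointwise-in-time decay by invoking the uniform Hölder regularity of Lemma \ref{L9}, and finally transfer the decay from $u$ to $v$ via elliptic regularity applied to the $v$-equation.

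First I would integrate \eqref{LT*} over $[t_0,\infty)$ for some fixed $t_0>0$. Because $\mathcal{E}$ is non-negative by Lemma \ref{LT}(1), and $\mathcal{E}(t_0)<\infty$ by Lemma \ref{L8} together with the strict positivity of $u(\cdot,t_0)$, one obtains
\[
\int_{t_0}^{\infty}\!\int_\Omega\bigl[(u-1)^2+(v-1)^2\bigr]\,dx\,dt \;\leq\; \frac{\mathcal{E}(t_0)}{\beta}\;<\;\infty.
\]
Now Lemma \ref{L9} implies that $t\mapsto \|u(\cdot,t)-1\|_{L^2(\Omega)}^{2}$ is uniformly continuous on $[t_0,\infty)$; a standard fact says a non-negative, uniformly continuous, $L^1(t_0,\infty)$ function must vanish at infinity, so $\|u(\cdot,t)-1\|_{L^2(\Omega)}\to 0$ as $t\to\infty$.

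To pass from $L^2$- to $L^\infty$-convergence of $u-1$, I would interpolate via Gagliardo--Nirenberg,
\[
\|u(\cdot,t)-1\|_{L^\infty(\Omega)} \;\leq\; C\,\|u(\cdot,t)-1\|_{C^{\sigma}(\bar\Omega)}^{1-\theta}\,\|u(\cdot,t)-1\|_{L^2(\Omega)}^{\theta}
\]
for an appropriate $\theta\in(0,1)$; the first factor is uniformly bounded in $t$ thanks to Lemmas \ref{L8} and \ref{L9}, while the second factor vanishes in the limit, producing \eqref{LT3-1}. For \eqref{LT3-2}, note that $w:=v-1$ satisfies $-\Delta w+w=u-1$ with $\partial_\nu w=0$ on $\partial\Omega$; the Agmon-Douglis-Nirenberg estimate \eqref{W2P} gives $\|v-1\|_{W^{2,p}(\Omega)}\leq C\|u-1\|_{L^p(\Omega)}$ for any $p>1$, and since $\R^2$ allows $W^{2,p}\hookrightarrow L^\infty$, one concludes $\|v(\cdot,t)-1\|_{L^\infty(\Omega)}\leq C\|u(\cdot,t)-1\|_{L^\infty(\Omega)}\to 0$.

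The main obstacle is the step promoting $L^1$-in-time integrability to pointwise-in-time decay: the Lyapunov identity alone only yields convergence along some subsequence of times, so the uniform parabolic Hölder regularity of Lemma \ref{L9}, which rules out oscillatory behaviour as $t\to\infty$, is essential. Everything else is standard elliptic/interpolation bookkeeping once the uniform continuity of $t\mapsto \|u(\cdot,t)-1\|_{L^2}^{2}$ is in hand.
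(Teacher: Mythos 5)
Your proposal follows essentially the same route as the paper: integrate the dissipation inequality \eqref{LT*} to get the space--time integrability \eqref{LT2-1}, combine it with the uniform H\"older bound of Lemma \ref{L9} to force $\|u(\cdot,t)-1\|_{L^\infty}\to0$ (the paper delegates this step to the argument of \cite[Lemma 4.2]{JKW-SIAP-2018}, which is exactly the uniform-continuity-plus-interpolation reasoning you spell out), and then transfer the decay to $v$ through the elliptic equation for $v-1$ (the paper uses the maximum principle where you use ADN estimates, but the conclusion is the same). The argument is correct.
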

\begin{proof}
From Lemma \ref{LT}, we know $\mathcal {E}(t)\geq 0$ for all $t>0$. Then integrating  \eqref{LT*} over $[1,t]$,  we have
\begin{equation*}
\int_1^t\mathcal{F}(s)ds\leq \mathcal{E}(1)-\mathcal{E}(t)\leq \mathcal{E}(1),\ \ \  \mathrm{for\ all} \ t>1.
\end{equation*}
Using the definition of $\mathcal{F}(t)$, one can derive
\begin{equation}\label{LT2-1}
\int_1^t\int_\Omega \left[(u-1)^2+ \left(v-1\right)^2
\right]<\infty.
\end{equation}
Then combining \eqref{LT2-1} and Lemma \ref{L9}, and using a similar argument  as in \cite[Lemma 4.2]{JKW-SIAP-2018}, we obtain \eqref{LT3-1}. On the other hand, from the second equation of \eqref{1-1*}, we infer that $\psi(x,t):=v(x,t)-1$ satisfies
\begin{equation}\label{ep}
\begin{cases}
-\Delta \psi+\psi=u-1, &x\in\Omega, t>0,\\
\frac{\partial\psi}{\partial \nu}=0,&x\in\Omega, t>0.
\end{cases}
\end{equation}
Then using the {\color{black}elliptic  maximum principle}, we obtain from \eqref{ep} that
\begin{equation}\label{vc}
\|v(\cdot,t)-1\|_{L^\infty}=\|\psi(\cdot,t)\|_{L^\infty}\leq \|u(\cdot,t)-1\|_{L^\infty},
\end{equation}
which together with \eqref{LT3-1} gives \eqref{LT3-2}.
\end{proof}
\subsection{Exponential decay} Next, we shall show the convergence rate is exponential.
\begin{lemma}\label{LT5}
Assume that $\mu>\frac{K_0}{16}$, and suppose $(u,v)$ is the global classical solution of the system \eqref{1-1*}.  Then there exists two positive constants $C,\delta_*$ such that for all $t>0$
\begin{equation}\label{LT5-1}
\|u(\cdot,t)-1\|_{L^2}\leq C e^{-\frac{\delta_*}{2} t}.
\end{equation}
\end{lemma}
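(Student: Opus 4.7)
The plan is to bootstrap the differential inequality $\mathcal{E}'(t)\le -\mathcal{F}(t)$ from Lemma~\ref{LT} into a Gronwall-type decay for $\mathcal{E}(t)$ itself, and then translate that into $L^2$ decay for $u-1$ using the two-sided Taylor comparison between $u-1-\ln u$ and $(u-1)^2$ already observed in \eqref{LT-6}.

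The first step is to fix a threshold. By Lemma~\ref{LT3} there is $t_0>0$ such that $\|u(\cdot,t)-1\|_{L^\infty}\le \tfrac12$ for all $t\ge t_0$; for such $t$ the intermediate value $\tilde u$ appearing in \eqref{LT-6} lies in $[\tfrac12,\tfrac32]$, so that pointwise
\begin{equation*}
\tfrac{2}{9}(u-1)^2\ \le\ u-1-\ln u\ \le\ 2(u-1)^2.
\end{equation*}
Integrating gives, for every $t\ge t_0$,
\begin{equation*}
\tfrac{2}{9}\int_\Omega(u-1)^2\ \le\ \mathcal{E}(t)\ \le\ 2\int_\Omega(u-1)^2.
\end{equation*}
Combining the upper bound with the definition of $\mathcal{F}(t)$ (discarding the nonnegative $v$-contribution) yields
\begin{equation*}
\mathcal{F}(t)\ \ge\ \beta\int_\Omega(u-1)^2\ \ge\ \tfrac{\beta}{2}\mathcal{E}(t),
\end{equation*}
and together with \eqref{LT*} this gives the key ODI $\mathcal{E}'(t)\le -\tfrac{\beta}{2}\mathcal{E}(t)$ on $[t_0,\infty)$. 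Gronwall then produces $\mathcal{E}(t)\le \mathcal{E}(t_0)e^{-\beta(t-t_0)/2}$, and applying the lower Taylor bound turns this into
\begin{equation*}
\int_\Omega(u-1)^2\ \le\ \tfrac{9}{2}\mathcal{E}(t_0)e^{-\beta(t-t_0)/2}\qquad(t\ge t_0).
\end{equation*}

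The final step is to promote this to all $t>0$. On the bounded interval $[0,t_0]$ the global $L^\infty$ bound from Lemma~\ref{L8} gives $\int_\Omega(u-1)^2\le C$, so enlarging the constant absorbs the exponential factor on $[0,t_0]$ and we obtain
\begin{equation*}
\|u(\cdot,t)-1\|_{L^2}\ \le\ C\,e^{-\delta_* t/2}\qquad\text{for all }t>0
\end{equation*}
with $\delta_*=\beta/2$, which is \eqref{LT5-1}.

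I do not anticipate a serious obstacle: the whole argument is a clean Lyapunov decay once the regime $u\approx 1$ is entered, and the only slightly delicate point is the quadratic comparison $\mathcal{E}(t)\asymp \|u-1\|_{L^2}^2$, which needs $u$ to stay away from $0$ and $\infty$. That is precisely why Lemma~\ref{LT3} is invoked first to pick $t_0$; the rest is Gronwall plus a constant adjustment on the initial interval.
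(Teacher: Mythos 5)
Your proposal is correct and follows essentially the same route as the paper: invoke Lemma~\ref{LT3} to enter the regime $u\in(\tfrac12,\tfrac32)$ after some $t_0$, use the Taylor comparison \eqref{LT-6} to get the two-sided equivalence $\mathcal{E}(t)\asymp\int_\Omega(u-1)^2$, deduce $\mathcal{F}(t)\ge\delta_*\mathcal{E}(t)$ and hence $\mathcal{E}'(t)\le-\delta_*\mathcal{E}(t)$, and close with Gronwall. Your explicit constants $\tfrac29$ and $2$ and the adjustment of the constant on $[0,t_0]$ (which the paper leaves implicit) are both fine.
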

\begin{proof} From \eqref{LT3-1}, we can get a $t_0>0$ such that  for all $t> t_0$
\begin{equation*}
\|u(\cdot,t)-1\|_{L^\infty}<\frac{1}{2},
\end{equation*}
which immediately gives
\begin{equation}\label{LT4-1}
u(x,t)\in \left(\frac{1}{2},\frac{3}{2}\right) \ \ \mathrm{for \ all}\ \  x\in\Omega \ \ \mathrm{and}\ \ t>t_0.
\end{equation}
Then using \eqref{LT-6} and \eqref{LT4-1}, we can get two positive constants $c_1$ and $c_2$ such that
\begin{equation}\label{LT4-3}
c_1(u-1)^2\leq u-1-\ln u\leq c_2(u-1)^2 \ \  \mathrm{for \  all}\ \  u\in\left(\frac{1}{2},\frac{3}{2}\right).
\end{equation}
Hence, using \eqref{LT**} and \eqref{LT4-3}, and choosing  $\delta_*=\frac{\beta}{c_2}$, we have for all $t>t_0$ that
\begin{equation*}\label{T4-9-4}
\begin{split}
\mathcal{E}(t)
&\leq c_2\int_\Omega (u-1)^2\leq \frac{1}{\delta_*}\mathcal{F}(t),
\end{split}
\end{equation*}
which yields
\begin{equation}\label{LT4-2}
\mathcal{F}(t)\geq \delta_* \mathcal{E}(t)\ \ \ \ \ \ \mathrm{for\ all \ }t>t_0.
\end{equation}
Then the combination of \eqref{LT*} and \eqref{LT4-2} gives for all $t>t_0$
\begin{equation*}\label{LT5-3}
\mathcal{E}'(t)\leq -\mathcal{F}(t)\leq -\delta_* \mathcal{E}(t),
\end{equation*}
and hence
\begin{equation*}\label{LT5-4}
\mathcal{E}(t)\leq \mathcal{E}(t_0)e^{-\delta_*(t-t_0)}, \ \ \mathrm{for\ all}\ \  t>t_0,
\end{equation*}
which together with the fact
$
\mathcal{E}(t)\geq c_1\int_\Omega(u-1)^2
$
gives \eqref{LT5-1}. Then we finish the proof of Lemma \ref{LT5}.
\end{proof}
%
%
%
%
%

Next, we shall show the boundedness of $\|\nabla u\|_{L^4}$ to obtain the convergence rate with $L^\infty$-norm. More precisely, we have the following results.
\begin{lemma}\label{L4e}
There exists a constant $C>0$ independent of $t$ such that the solution $(u,v)$ of \eqref{1-1*} satisfies
\begin{equation}\label{L4e-1}
\|\nabla u(\cdot,t )\|_{L^4}\leq C \ \ \mathrm{for\ all}\ \ t\in(0,T_{max}).
\end{equation}
\end{lemma}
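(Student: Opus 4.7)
\textbf{Proof sketch for Lemma \ref{L4e}.}

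The plan is to recast the first equation of \eqref{1-1*} in non-divergence form with bounded coefficients, derive an $H^1$ energy estimate for $u$ by testing with $-\Delta u$, and then upgrade to $L^4$ control of $\nabla u$ via the Gagliardo-Nirenberg inequality together with a pointwise-in-time bound on $\|\Delta u\|_{L^2}$ supplied by parabolic regularity.

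First, I would expand $\nabla\cdot(\gamma(v)\nabla u - u\chi(v)\nabla v)$ by the product rule and substitute $\Delta v = v-u$ from the second equation of \eqref{1-1*} to rewrite the $u$-equation as
\begin{equation*}
u_t - \gamma(v)\Delta u = b(x,t)\cdot \nabla u + f(x,t),
\end{equation*}
where $b = [\gamma'(v)-\chi(v)]\nabla v$ and $f = -u\chi'(v)|\nabla v|^2 - u\chi(v)(v-u)+\mu u(1-u)$. By Lemmas \ref{L4}, \ref{L6}, \ref{L7} and \ref{L8}, the coefficient satisfies $\gamma_1 \leq \gamma(v)\leq \gamma_2$ and $\|b\|_{L^\infty}+\|f\|_{L^\infty}\leq C$. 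Moreover, the no-flux condition $\gamma(v)\partial_\nu u - u\chi(v)\partial_\nu v=0$ together with $\partial_\nu v = 0$ forces $\partial_\nu u = 0$ on $\partial\Omega$, so all boundary integrals vanish in the subsequent integrations by parts.

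Second, I would test with $-\Delta u$. Integrating by parts, using $\partial_\nu u =0$ and Young's inequality with an $\epsilon$ small enough to absorb $\|\Delta u\|_{L^2}^2$ into the diffusion term gives
\begin{equation*}
\frac{1}{2}\frac{d}{dt}\|\nabla u\|_{L^2}^2 + \frac{\gamma_1}{2}\|\Delta u\|_{L^2}^2 \leq C\|\nabla u\|_{L^2}^2 + C.
\end{equation*}
Combined with the interpolation $\|\nabla u\|_{L^2}^2 = -\int_\Omega u\,\Delta u \leq \epsilon\|\Delta u\|_{L^2}^2 + C_\epsilon \|u\|_{L^2}^2$ and the $L^2$-bound on $u$ from Lemma \ref{L5}, Gronwall's lemma yields $\|\nabla u(\cdot,t)\|_{L^2}\leq C$ for all $t\in(0,T_{max})$, together with $\int_t^{t+1}\|\Delta u(\cdot,s)\|_{L^2}^2\,ds \leq C$ uniformly in $t$.

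Third, I would invoke the two-dimensional Gagliardo-Nirenberg inequality $\|\nabla u\|_{L^4}^2 \leq C\|D^2 u\|_{L^2}\|\nabla u\|_{L^2} + C\|\nabla u\|_{L^2}^2$ and the Calder\'on-Zygmund estimate $\|D^2 u\|_{L^2}^2\leq C(\|\Delta u\|_{L^2}^2+\|\nabla u\|_{L^2}^2)$ for Neumann data. This reduces the claim to a pointwise-in-$t$ bound for $\|\Delta u\|_{L^2}$. This last bound — the main obstacle, since the previous $-\Delta u$ test only gives it integrated in time — I would obtain by parabolic $L^p$-maximal regularity applied to the non-divergence equation $u_t-\gamma(v)\Delta u=b\cdot\nabla u +f$ on time slabs $[t,t+1]\times\Omega$. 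Indeed, Lemma \ref{L9} gives $u\in C^{\sigma,\sigma/2}$, which by elliptic regularity propagates to $v\in C^{2+\sigma,\sigma/2}$, so $\gamma(v)$ is uniformly H\"older in $(x,t)$; combined with $\|b\|_{L^\infty}$, $\|f\|_{L^\infty}\leq C$ and the uniform $L^\infty$-bound on $u$, this delivers $\|u\|_{W^{2,1}_p([t,t+1]\times\Omega)}\leq C$ for any $p<\infty$ uniformly in $t$, and Sobolev embedding in fact gives $\nabla u\in L^\infty$ uniformly, which is stronger than the desired $L^4$-bound \eqref{L4e-1}.
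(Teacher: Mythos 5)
Your route is genuinely different from the paper's. The paper proves Lemma \ref{L4e} by a direct fourth‑order energy estimate: it computes $\frac{1}{4}\frac{d}{dt}\int_\Omega|\nabla u|^4=\int_\Omega|\nabla u|^2\nabla u\cdot\nabla u_t$, integrates by parts using the identity $\Delta|\nabla u|^2=2\nabla u\cdot\nabla\Delta u+2|D^2u|^2$, controls the boundary and cross terms with the pointwise bounds on $u$, $v$, $\nabla v$ from Lemmas \ref{L4}--\ref{L8}, and arrives at $\frac{d}{dt}\int_\Omega|\nabla u|^4+\int_\Omega|\nabla u|^4\leq C$, which Gronwall closes using only $\|\nabla u_0\|_{L^4}<\infty$. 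Your first two steps --- the non‑divergence rewriting $u_t-\gamma(v)\Delta u=b\cdot\nabla u+f$ with $\|b\|_{L^\infty}+\|f\|_{L^\infty}\leq C$, and the $H^1$ estimate obtained by testing with $-\Delta u$ --- are correct and would serve equally well as a starting point; they yield $\|\nabla u\|_{L^2}\leq C$ and $\int_t^{t+1}\|\Delta u\|_{L^2}^2\,ds\leq C$ as you say.

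The gap is in the final step, where the hard part (pointwise‑in‑time control of second derivatives) is outsourced to parabolic maximal regularity. First, maximal $L^p$ regularity on a slab $[t,t+1]$ requires the trace $u(\cdot,t)$ to lie in the initial‑value space $B^{2-2/p}_{p,p}(\Omega)$, which none of the preceding lemmas provide; the standard repair is to work on $[t-1,t+1]$ with a temporal cutoff vanishing at $t-1$. That fixes the argument for $t\geq 1$, but on $(0,1)$ you must use the true datum, and $u_0\in W^{1,\infty}(\Omega)$ does \emph{not} belong to $B^{2-2/p}_{p,p}$ for the exponents $p>4$ needed for the embedding $W^{2,1}_p\hookrightarrow C^{1+\alpha,(1+\alpha)/2}$ that gives $\nabla u\in L^\infty$. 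So the claimed uniform $W^{2,1}_p$ bound ``for any $p<\infty$'', and hence \eqref{L4e-1}, is not justified near $t=0$. Second, the intermediate suggestion of extracting a pointwise‑in‑time bound on $\|\Delta u(\cdot,t)\|_{L^2}$ from $W^{2,1}_p$ regularity with moderate $p$ also fails: $W^{2,1}_p$ only embeds into $C([t,t+1];B^{2-2/p}_{p,p})$, which never controls second spatial derivatives pointwise in time, so you are forced to the large‑$p$ version and thus back to the trace problem above. A differential inequality for $\int_\Omega|\nabla u|^4$ itself (as in the paper, or as a continuation of your $-\Delta u$ test) avoids both issues.
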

\begin{proof} Using the first equation of \eqref{1-1*}, we obtain
\begin{equation}\label{L4e-2}
\begin{split}
\frac{1}{4}\frac{d}{dt}\int_\Omega |\nabla u|^4
=&\int_\Omega |\nabla u|^2\nabla u\cdot \nabla u_t\\
=&\int_\Omega |\nabla u|^2\nabla u\cdot \nabla (\nabla \cdot(\gamma(v)\nabla u))-\int_\Omega |\nabla u|^2\nabla u\cdot \nabla (\nabla \cdot(\chi(v)u\nabla v))\\
&+\mu \int_\Omega (1-2u)|\nabla u|^4\\
=&: J_1+J_2+J_3.
\end{split}
\end{equation}
We can estimate the term $J_1$ as follows:
\begin{equation}\label{J1}
\begin{split}
J_1
&=-\int_\Omega |\nabla u|^2\Delta u  \nabla \cdot(\gamma(v)\nabla u)-\int_\Omega \nabla |\nabla u|^2\cdot \nabla u\nabla \cdot(\gamma(v)\nabla u)\\
&=\int_\Omega \gamma(v)|\nabla u|^2\nabla \Delta u\cdot \nabla u-\int_\Omega \gamma'(v)\nabla |\nabla u|^2\cdot \nabla u\nabla u\cdot\nabla v\\
&=\frac{1}{2}\int_\Omega\gamma(v)|\nabla u|^2 \Delta |\nabla u|^2-\int_\Omega \gamma(v) |\nabla u|^2 |D^2 u|^2-\int_\Omega \gamma'(v)\nabla |\nabla u|^2\cdot \nabla u\nabla u\cdot\nabla v\\
&=\frac{1}{2}\int_{\partial\Omega} \gamma(v)|\nabla u|^2\frac{\partial |\nabla u|^2}{\partial \nu}dS-\frac{1}{2}\int_\Omega \gamma'(v)|\nabla u|^2\nabla v\cdot \nabla |\nabla u|^2-\frac{1}{2}\int_\Omega \gamma(v)|\nabla |\nabla u|^2|^2\\
&\ \ \ -\int_\Omega \gamma(v) |\nabla u|^2 |D^2 u|^2-\int_\Omega \gamma'(v)\nabla |\nabla u|^2\cdot \nabla u\nabla u\cdot\nabla v\\
&\leq \frac{1}{2}\int_{\partial\Omega} \gamma(v)|\nabla u|^2\frac{\partial |\nabla u|^2}{\partial \nu}dS-\frac{1}{2}\int_\Omega \gamma(v)|\nabla |\nabla u|^2|^2 -\int_\Omega \gamma(v) |\nabla u|^2 |D^2 u|^2\\
&\ \ \ \ +\frac{3}{2}\int_\Omega |\gamma'(v)||\nabla |\nabla u|^2||\nabla u|^2|\nabla v|.
\end{split}
\end{equation}
Using the boundedness of $\|u\|_{L^\infty}$ and $\|v\|_{W^{1,\infty}}$ obtained in Lemma \ref{L8} and the assumptions in (H1) as well as the fact $\Delta v=v-u$, we have
\begin{equation*}
\begin{split}
\nabla \cdot (\chi(v)u\nabla v)
&=\chi'(v) u |\nabla v|^2+\chi(v)\nabla u\cdot\nabla v+\chi(v) u\Delta v\\
&=\gamma''(v) u |\nabla v|^2+\chi(v)\nabla u\nabla v+\chi(v) uv-\chi(v)u^2\\
&\leq c_1(1+|\nabla u|),
\end{split}
\end{equation*}
which substituted into $J_2$ gives
\begin{equation}\label{J2}
\begin{split}
J_2
&=\int_\Omega \nabla |\nabla u|^2\cdot \nabla u \nabla \cdot (\chi(v)u\nabla v)+\int_\Omega |\nabla u|^2\Delta u\nabla \cdot (\chi(v)u\nabla v)\\
&\leq c_1\int_\Omega |\nabla u||\nabla |\nabla u|^2|(1+|\nabla u|)+c_1\int_\Omega |\nabla u|^2|\Delta u|(1+|\nabla u|).\\
\end{split}
\end{equation}
Moreover, the boundedness of $\|u\|_{L^\infty}$ directly gives
\begin{equation}\label{J3}
J_3\leq c_2\int_\Omega |\nabla  u|^4.
\end{equation}
Substituting \eqref{J1}-\eqref{J3} into \eqref{L4e-2}, and noting the facts $\gamma(v)\geq \gamma_1>0$ and $|\Delta u|\leq \sqrt{2}|D^2 u|$,  we have
\begin{equation*}\label{L4e-3}
\begin{split}
&\frac{1}{4}\frac{d}{dt}\int_\Omega |\nabla u|^4+\frac{\gamma_1}{2}\int_\Omega |\nabla |\nabla u|^2|^2 +\gamma_1\int_\Omega  |\nabla u|^2 |D^2 u|^2\\
&\leq \frac{1}{2}\int_{\partial \Omega} \gamma(v)|\nabla u|^2\frac{\partial |\nabla u|^2}{\partial \nu}dS+\frac{3}{2}\int_\Omega |\gamma'(v)||\nabla |\nabla u|^2||\nabla u|^2|\nabla v|\\
&\ \ \ \ +c_1\int_\Omega |\nabla u||\nabla |\nabla u|^2|(1+|\nabla u|)+c_1\int_\Omega |\nabla u|^2|\Delta u|(1+|\nabla u|)+c_2\int_\Omega |\nabla  u|^4\\
&\leq\frac{\gamma_1}{4}\int_\Omega  |\nabla |\nabla u|^2|^2 +\frac{\gamma_1}{2}\int_\Omega |\nabla u|^2 |D^2 u|^2+c_3\int_\Omega |\nabla  u|^4+c_4,\\
\end{split}
\end{equation*}
which leads to
\begin{equation}\label{L4e-3}
\begin{split}
\frac{d}{dt}\int_\Omega |\nabla u|^4+\gamma_1\int_\Omega |\nabla |\nabla u|^2|^2 +2\gamma_1\int_\Omega  |\nabla u|^2 |D^2 u|^2
&\leq4c_3\int_\Omega |\nabla  u|^4+4c_4.\\
\end{split}
\end{equation}
{\color{black}
On the other hand, using the boundedness of $\|u\|_{L^\infty}$ and the fact $|\Delta u|\leq \sqrt{2}|D^2 u|$ again, we have
\begin{equation*}
\begin{split}
\left(\frac{3}{2}+4c_3\right)\int_\Omega|\nabla u|^4
&=\left(\frac{3}{2}+4c_3\right)\int_\Omega |\nabla u|^2\nabla u\cdot\nabla u\\
&=-\left(\frac{3}{2}+4c_3\right)\int_\Omega u\nabla |\nabla u|^2\cdot\nabla u-\left(\frac{3}{2}+4c_3\right)\int_\Omega u|\nabla u|^2\Delta u\\
&\leq \gamma_1\int_\Omega |\nabla |\nabla u|^2|^2 +2\gamma_1\int_\Omega  |\nabla u|^2 |D^2 u|^2+c_5\int_\Omega|\nabla u|^2\\
&\leq \gamma_1\int_\Omega |\nabla |\nabla u|^2|^2 +2\gamma_1\int_\Omega  |\nabla u|^2 |D^2 u|^2+\frac{1}{2}\int_\Omega|\nabla u|^4+c_6,\\
\end{split}
\end{equation*}
which substituted into \eqref{L4e-3} gives}
\begin{equation}\label{L4e-4}
\frac{d}{dt}\int_\Omega |\nabla u|^4+\int_\Omega |\nabla u|^4\leq c_7.
\end{equation}
Then applying the Gronwall's inequality to \eqref{L4e-4} yields \eqref{L4e-1} and the proof is completed.

\end{proof}
\begin{lemma}\label{LT6}
Suppose $\mu>\frac{K_0}{16}$, and let $(u,v)$ be the global classical solution of the system \eqref{1-1*}.  Then there exists a constants $C>0$ such that for all $t>0$
\begin{equation}\label{LT6-1}
\|u(\cdot,t)-1\|_{L^\infty}\leq C e^{-\frac{\delta_*}{6} t},
\end{equation}
and
\begin{equation}\label{LT6-2}
\ \ \ \|v(\cdot,t)-1\|_{L^\infty}\leq C e^{-\frac{\delta_*}{6} t}.
\end{equation}

\end{lemma}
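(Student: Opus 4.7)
The plan is to upgrade the $L^2$ exponential decay of $u-1$ obtained in Lemma \ref{LT5} to an $L^\infty$ exponential decay by interpolation against the uniform $W^{1,4}$ bound on $u$ supplied by Lemma \ref{L4e}, and then to transfer the decay from $u$ to $v$ through the elliptic maximum principle identity \eqref{vc} that was already set up in the proof of Lemma \ref{LT3}.

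More concretely, since $n=2$, I would invoke the Gagliardo–Nirenberg inequality in the form
\begin{equation*}
\|w\|_{L^\infty(\Omega)}\le C\bigl(\|\nabla w\|_{L^4(\Omega)}^{\theta}\|w\|_{L^2(\Omega)}^{1-\theta}+\|w\|_{L^2(\Omega)}\bigr),
\end{equation*}
with $w:=u(\cdot,t)-1$ and the scaling exponent $\theta=\tfrac{2}{3}$ (obtained from $0=\theta(\tfrac{1}{4}-\tfrac{1}{2})+(1-\theta)\tfrac{1}{2}$). Because $\nabla w=\nabla u$, Lemma \ref{L4e} yields $\|\nabla w\|_{L^4}\le C$ uniformly in $t$. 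Combining this with $\|w\|_{L^2}\le Ce^{-\delta_\ast t/2}$ from Lemma \ref{LT5} gives
\begin{equation*}
\|u(\cdot,t)-1\|_{L^\infty}\le C\bigl(e^{-\delta_\ast t/6}+e^{-\delta_\ast t/2}\bigr)\le Ce^{-\delta_\ast t/6}\quad\text{for all }t>0,
\end{equation*}
which is precisely \eqref{LT6-1}. (The factor $\tfrac{1}{6}=\tfrac{1}{3}\cdot\tfrac{1}{2}$ comes from the interpolation exponent $1-\theta=\tfrac{1}{3}$ applied to the $L^2$ rate $\tfrac{\delta_\ast}{2}$, which is why the rate in the statement is $\delta_\ast/6$ rather than $\delta_\ast/2$.)

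For \eqref{LT6-2}, I would reuse the observation \eqref{vc} from the proof of Lemma \ref{LT3}: setting $\psi=v-1$, equation \eqref{ep} is a linear Neumann problem with right-hand side $u-1$, and the elliptic maximum principle already gave $\|v(\cdot,t)-1\|_{L^\infty}\le\|u(\cdot,t)-1\|_{L^\infty}$. Substituting the bound just established for $\|u(\cdot,t)-1\|_{L^\infty}$ produces \eqref{LT6-2} with the same rate $\delta_\ast/6$.

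No single step here is particularly deep; the only subtlety is bookkeeping the interpolation exponent. The heavy lifting has been done upstream: the $L^2$ exponential decay from the Lyapunov functional $\mathcal{E}(t)$ (Lemma \ref{LT5}) and the $t$-uniform $W^{1,4}$ estimate for $u$ (Lemma \ref{L4e}). The main potential pitfall would be if $\|\nabla u\|_{L^4}$ were not bounded uniformly in $t$ — then one would have to balance its growth against the $L^2$ decay — but Lemma \ref{L4e} delivers precisely the uniform-in-time bound needed, so the Gagliardo–Nirenberg step closes the argument cleanly.
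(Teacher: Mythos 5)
Your argument is correct and is essentially identical to the paper's proof: the paper also applies the Gagliardo--Nirenberg inequality $\|u-1\|_{L^\infty}\leq c_1\|\nabla u\|_{L^4}^{2/3}\|u-1\|_{L^2}^{1/3}+c_1\|u-1\|_{L^2}$, combines the uniform bound from Lemma \ref{L4e} with the $L^2$ decay from Lemma \ref{LT5} to get the rate $\delta_*/6$, and then transfers the decay to $v$ via \eqref{vc}. Your exponent bookkeeping ($\theta=2/3$, hence $1-\theta=1/3$ acting on the rate $\delta_*/2$) matches the paper exactly.
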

\begin{proof}
Using the Gagliardo-Nirenberg inequality, \eqref{LT5-1} and \eqref{L4e-1}, we have
\begin{equation*}\label{vc-1}
\begin{split}
\|u-1\|_{L^\infty}
&\leq c_1\|\nabla u\|_{L^4}^\frac{2}{3}\|u-1\|_{L^2}^\frac{1}{3}+c_1\|u-1\|_{L^2}\\
&\leq c_2 e^{-\frac{\delta_*}{6} t}+c_2e^{-\frac{\delta_*}{2} t}\\
&\leq 2c_2 e^{-\frac{\delta_*}{6} t},
\end{split}
\end{equation*}
which gives \eqref{LT6-1}.  \eqref{LT6-2} follows from \eqref{LT6-1} due to \eqref{vc}. This competes the proof of  Lemma \ref{LT6}.

\end{proof}
\begin{proof}[Proof of Theorem \ref{GB}]Theorem \ref{GB} is a consequence results by combining Lemma \ref{L8} and Lemma \ref{LT6}.
\end{proof}

\bigbreak

\noindent \textbf{Acknowledgement}.
We are grateful to the referee for several helpful comments improving our results. The research of H.Y. Jin was supported by the NSF of China (No. 11871226), Guangdong Basic and Applied Basic Research Foundation (No. 2020A1515010140), Guangzhou Science and Technology Program and the Fundamental Research Funds for the Central Universities. The research of Z.A. Wang was supported by the Hong Kong RGC GRF grant 15303019 (Project ID P0030816).


\begin{thebibliography}{99}
\bibitem{A-CPAM-1}
S. Agmon, A. Douglis and L. Nirenberg, Estimates near the boundary for solutions of elliptic partial differential equations satisfying general boundary conditions. I, {\it Commun.  Pure Appl. Math.,} 12:623-727, 1959.
\bibitem{A-CPAM-2}
S. Agmon, A. Douglis and L. Nirenberg, Estimates near the boundary for solutions of elliptic partial differential equations satisfying general boundary conditions. II, {\it Commun.  Pure Appl. Math.,} 17:35-92, 1964.
\bibitem{AY-Nonlinearity-2019}J. Ahn and C. Yoon,  Global well-posedness and stability of constant equilibria in parabolic-elliptic chemotaxis systems without gradient sensing. {\it Nonlinearity,} 32:1327-1351, 2019.
\bibitem{BW-2016} X. Bai and M. Winkler, Equilibration in a fully parabolic two-species chemotaxis system with competitive kinetics.
{\it Indiana Univ. Math. J.,} 65:553-583, 2016.
\bibitem{Fu}X. Fu, L.-H. Tang, C. Liu, J.-D. Huang, T. Hwa and P. Lenz, Stripe formation in bacterial system with density-suppressed motility. {\it Phys. Rev. Lett.}, 108:198102, 2012.
{\color{black}\bibitem{Fujie-Jiang}K. Fujie and J. Jiang, Comparison methods for a Keller-Segel-type model of pattern formations with density-suppressed motilities. Arxiv:2001.01288.}
\bibitem{Fujie-Jiang1} K. Fujie and J. Jiang, Global existence for a kinetic model of pattern formation with density-suppressed motilities. {\it J. Differential Equations,} in press, 2020. DOI:10.1016/j.jde.2020.04.001.
\bibitem{FS-Nonlinearity-2016}K. Fujie and T. Senba, Global existence  and boundedness of radial solutions to a two dimensional fully parabolic chemotaxis system with general sensitivity. {\it Nonlinearity}, 29(8):2417-2450, 2016.
\bibitem{JKW-SIAP-2018} H.Y. Jin, Y.J. Kim and Z.A. Wang, Boundedness, stabilization, and pattern formation driven by density-suppressed motility. {\it SIAM J. Appl. Math.}, 78(3):1632-1657, 2018.
\bibitem{JW-PAMS-2019}H.Y. Jin and Z.A. Wang, Critical mass on the Keller-Segel system with signal-dependent motility. {\it Proc. Amer. Math. Soc.,} in press, 2020. DOI: 10.1090/proc/15124.

\bibitem{KS1} E.F. Keller and L.A. Segel, Initiation of slime mold aggregation viewed as an instability. {\it J.\ Theor. \ Biol.}, 26:399-415, 1970.


\bibitem {KOS} K. Kuto, K. Osaki, T. Sakurai and T. Tsujikawa, Spatial pattern formation in a chemotaxis-diffusion-growth model. {\it Phys. D,} 241:1629-1639, 2012.
\bibitem {Lan-DCDSB-2015} J. Lankeit, Chemotaxis can prevent thresholds on population density. {\it  Discrete Contin. Dyn. Syst. Ser. B,} 20:1499-1527, 2015.
    \bibitem {LM-DCDSA-2016} K. Lin and C. Mu, Global dynamics in a fully parabolic chemotaxis system with logistic source. {\it Discrete Contin. Dyn.
Syst.,} 36:5025-5046, 2016.
\bibitem{Liu} C. Liu {\it et al}., Sequtential establishment of stripe patterns in an expanding cell population. {\it Science}, 334:238-241, 2011.




\bibitem{MOW} M.J. Ma, C.H. Ou and Z.A. Wang, Stationary solutions of a volume filling chemotaxis model with logistic growth and their stability. {\it SIAM J. Appl. Math}., 72:740-766, 2012.
    \bibitem{MPW-PD-2019} M. Ma, R. Peng and Z. Wang, Stationary and non-stationary patterns of the density-suppressed motility model. {\it Phys. D}, 402, 132259, 2020.

        \bibitem{MS-2014}N. Mizoguchi and P. Souplet, Nondegeneracy of blow-up points for the parabolic Keller-Segel system. {\it Ann. Inst. H. Poincar\'{e} Anal. Non Lin\'{e}aire,} 31:851-875, 2014.
\bibitem{Nagai-Funk}T. Nagai, T. Senba and K. Yoshida,  Application of the Trudinger-Moser inequality to a parabolic system of chemotaxis. {\it{Funkcial. Ekvac. Ser. Internat.,}} 40:411-433, 1997.
        \bibitem{OTYM} K. Osaki, T. Tsujikawa, A. Yagi and M. Mimura, Exponential attractor for a chemotaxis-growth system of equations. {\it Nonlinear Anal. TMA,} 51:119-144, 2002.
            \bibitem{RT} M.M. Porzio and  V.Vespri, H\"{o}lder estimates for local solutions of some doubly nonlinear degenerate parabolic equations. {\it {J. Differential Equations,}} 103(1):146-178, 1993.

\bibitem{PP-2007} P. Souplet and P. Quittner, {\it Superlinear Parabolic Problems: Blow-up, Global Existence and Steady States.} Birkh\"{a}user Advanced Texts, Basel/Boston/Berlin, 2007.




\bibitem {TW-JDE-2014}  Y.S. Tao and M. Winkler, Energy-type estimates and global solvability in two-dimensional chemotaxis-hyptotaxis model with remodeling  of non-diffusible attractant. {\it J. Differential Equations,} 257:784-815, 2014.
\bibitem{PH-PD-2011} K.J. Painter and T. Hillen, Spatio-temporal chaos in a chemotaxis model. {\it Phys. D,} 240:363-375, 2011.
\bibitem{RT} M.M. Porzio and  V.Vespri, H\"{o}lder estimates for local solutions of some doubly nonlinear degenerate parabolic equations. {\it {J. Differential Equations,}} 103(1):146-178, 1993.

\bibitem{Smith} J. Smith-Roberge, D. Iron and T. Kolokolnikov, Pattern formation in bacterial colonies with density-dependent diffusion. {\it Eur. J. Appl. Math.,} 30:196-218, 2019.
\bibitem{TW-M3AS-2013}
Y.S. Tao and Z.A. Wang, Competing effects of attraction {\it vs.} repulsion in chemotaxis. {\it
Math. Models Methods Appl. Sci.,} 23:1-36, 2013.
\bibitem{Tao-Winkler-SIMA-2015} Y.S. Tao and M. Winkler, Large time behavior in a multidimensional chemotaxis-haptotaxis model with slow signal diffusion. {\it SIAM J. Math. Anal.}, 47:4229-4250, 2015.
\bibitem{TW-M3AS-2017} Y. Tao and M. Winkler, Effects of signal-dependent motilities in a Keller-Segel-type reaction-diffusion system. {\it Math. Models Meth. Appl. Sci.}, 27(19):1645-1683, 2017.
\bibitem {TeW-CPDE-2007}J.I. Tello and M. Winkler, A chemotaxis system with logistic source. {\it  Comm. Partial Differential Equations,} 32(6):849-877, 2007.
\bibitem{WW-JMP-2019} J. Wang and M. Wang, Boundedness in the higher-dimensional Keller-Segel model with signal-dependent motility and logistic growth. {\it J. Math. Phys., } 60:011507, 2019.
\bibitem {W-CPDE-2010} M. Winkler, Boundedness in the higher-dimensional parabolic-parabolic chemotaxis system with logistic source. {\it  Comm. Partial Differential Equations,} 35:1516-1537, 2010.
\bibitem{W-M2AS-2011}M. Winkler, Global solutions in a fully parabolic chemotaxis system with singular sensitivity. {\it Math. Methods Appl. Sci.,} 34:176-190, 2011.
\bibitem {Winkler-JDE-2014} M. Winkler, Global asymptotic stability of constant equilibria in a fully parabolic chemotaxis system with strong logistic dampening. {\it  J. Differential Equations,} 257(4):1056-1077, 2014.


{\color{black}\bibitem{Xia-Ma} P. Xia, Y. Han, J. Tao and M. Ma, Existence and metastability  of non-constant steady states in a Keller-Segel model with density-suppressed motility. {\it Mathematics in Applied Sciences and Engineering,} 1(1):1-15, 2020.}


\bibitem{X-JDE-2015} T. Xiang, Boundedness and global existence in the higher-dimensional parabolic-parabolic chemotaxis system with/without growth source. {\it J. Differential Equations,} 258:4275-4323, 2015.

{\color{black}\bibitem{YK} C. Yoon and Y.-J. Kim, Global existence and aggregation in a Keller-Segel model with Fokker-Planck diffusion.  {\it Acta Application Mathematics}, 149:101-123, 2017.}










\end{thebibliography}
\end{document}